\documentclass[11pt]{article}

\usepackage{tikz}
\usepackage[utf8]{inputenc}
\usepackage[T1]{fontenc}
\usepackage{graphicx}
\usepackage{xcolor}
\usepackage[english]{babel}
\usepackage{hyperref}
\usepackage{fancyhdr}
\usepackage{amsmath, amstext, amsopn, amssymb,amsthm}
\usepackage{thmtools}
\usepackage{mathtools}
\usepackage{stmaryrd}
\SetSymbolFont{stmry}{bold}{U}{stmry}{m}{n}
\usepackage{bm}
\usepackage{bbm}
\usepackage[shortlabels, inline]{enumitem}
\usepackage[left=3cm,right=3cm,top=3cm,bottom=3cm]{geometry}
\usepackage[nopatch=footnote]{microtype}

\addto\extrasenglish{

}
\hypersetup{
	colorlinks   = true,
	urlcolor     = blue,
	linkcolor    = blue,
	citecolor    = blue
}

\DeclareMathOperator*{\argmin}{arg\,min}

\DeclareMathOperator{\supp}{supp}

\DeclareMathOperator{\id}{Id}
\DeclareMathOperator{\spanop}{span}

\DeclarePairedDelimiter{\norm}{\lVert}{\rVert}
\DeclarePairedDelimiter{\abs}{\lvert}{\rvert}
\DeclarePairedDelimiterX{\inner}[2]{\langle}{\rangle}{{#1},{#2}}
\DeclarePairedDelimiter{\bbrac}{\llbracket}{\rrbracket}

\newcommand{\R}{\mathbb{R}}
\newcommand{\N}{\mathbb{N}}

\newcommand{\OT}{\mathcal{T}}

\newcommand{\calN}{\mathcal{N}}

\newcommand{\eps}{\varepsilon}
\newcommand{\sym}{\operatorname{Sym}}
\newcommand{\bmpi}{\bm{\pi}}
\newcommand{\bmalpha}{\bm{\alpha}}

\newcommand{\bmu}{\bm{u}}
\DeclareMathOperator{\OPT}{OTP}
\DeclareMathOperator{\EP}{EP}
\DeclareMathOperator{\LP}{LP}
\DeclareMathOperator{\coh}{coh}
\newcommand{\vc}[1]{\bm{#1}}
\DeclareMathOperator{\shiftc}{SC}

\DeclareMathOperator{\im}{im}
\DeclareMathOperator{\aff}{aff}

\makeatletter
\newcommand*{\bigcdot}{}%
\DeclareRobustCommand*{\bigcdot}{%
	\mathbin{\mathpalette\bigcdot@{}}%
}
\newcommand*{\bigcdot@scalefactor}{.5}
\newcommand*{\bigcdot@widthfactor}{1.15}
\newcommand*{\bigcdot@}[2]{%
	\sbox0{$#1\vcenter{}$}%
	\sbox2{$#1\cdot\m@th$}%
	\hbox to \bigcdot@widthfactor\wd2{%
		\hfil
		\raise\ht0\hbox{%
			\scalebox{\bigcdot@scalefactor}{%
				\lower\ht0\hbox{$#1\bullet\m@th$}%
			}%
		}%
		\hfil
	}%
}
\makeatother
\newcommand{\argdot}{\,\bigcdot\,}
\newcommand{\restr}[2]{{\left.\kern-\nulldelimiterspace #1 \vphantom{\big|} \right|_{#2}}}

\newcommand{\utr}[1]{\bm{{\triangledown}#1}}
\newcommand{\ltr}[1]{\bm{{\vartriangle}#1}}

\makeatletter
\newcommand*{\transp}{%
	{\mathpalette\@transpose{}}%
}
\newcommand*{\@transpose}[2]{%
	\raisebox{\depth}{$\m@th#1\intercal$}%
}
\makeatother
\newcommand{\asconv}{\xrightarrow{\mathmakebox[0.7cm]{\mathit{a.s.}}}}
\newcommand{\wconv}{\xrightarrow{\mathmakebox[0.7cm]{w}}}
\newcommand{\defeq}{\coloneqq}
\newcommand*{\defiff}{\;\vcentcolon\Longleftrightarrow\;}
\newcommand{\idn}[1]{\bbrac{#1}}
\newcommand{\shifteq}{\sim_{\oplus}}
\newcommand{\bigO}{\mathcal{O}}

\theoremstyle{plain}
\newtheorem{sectioncount}{xxxxxxx}[section]
\newtheorem{theorem}[sectioncount]{Theorem}

\newtheorem{lemma}[sectioncount]{Lemma}
\newtheorem{corollary}[sectioncount]{Corollary}
\theoremstyle{definition}
\newtheorem{example}[sectioncount]{Example}
\newtheorem{remark}[sectioncount]{Remark}
\newtheorem{definition}[sectioncount]{Definition}

\newtheorem{propertyXXX}{Proposition}
\newenvironment{property}[1]
{\begin{propertyXXX}}
{\end{propertyXXX}}

\usepackage[backend = biber, maxbibnames = 10, sortcites = true, giveninits = true]{biblatex}
\usepackage{csquotes}
\addbibresource{references.bib}
\DeclareListInputHandler{location}{}

\newcommand{\footremember}[2]{
	\footnote{#2}
	\newcounter{#1}
	\setcounter{#1}{\value{footnote}}
}
\newcommand{\footrecall}[1]{
	\footnotemark[\value{#1}]
}

\begin{document}

\title{Identifiability and Exact Reconstruction \\ of the Optimal Transport Cost on Finite Spaces}
\author{
\begin{tabular}{ c c c }
	Alberto Gonz\'{a}lez-Sanz\hspace*{-0.3em}\footremember{col}{Department of Statistics, Columbia University, New York, United States} &
	Michel Groppe\hspace*{-0.3em}\footremember{ims}{Institute for Mathematical Stochastics, University of G{\"o}ttingen, 37077 G{\"o}ttingen, Germany} &
	Axel Munk\hspace*{-0.3em}\footrecall{ims}\hspace*{-0.3em}\footremember{mbexc}{Cluster of Excellence ``Multiscale Bioimaging: from Molecular Machines to Networks of Excitable Cells'' (MBExC), University Medical Center, Robert-Koch-Stra{\ss}e 40, 37075 G{\"o}ttingen, Germany} \\
	{\footnotesize \href{mailto:ag4855@columbia.edu}{ag4855@columbia.edu}} &
	{\footnotesize \href{mailto:michel.groppe@uni-goettingen.de}{michel.groppe@uni-goettingen.de}} &
	{\footnotesize \href{mailto:munk@math.uni-goettingen.de}{munk@math.uni-goettingen.de}}
\end{tabular}
}

\maketitle

\begin{abstract}
	The goal of optimal transport (OT) is to find optimal assignments or matchings between data sets which minimize the total cost for a given cost function. However, sometimes the cost function is unknown but we have access to (parts of) the solution to the OT problem, e.g.\ the OT plan or the value of the objective function. Recovering the cost from such information is called inverse OT and has become recently of certain interest triggered by novel applications, e.g.\ in social science and economics. This raises the issue under which circumstances such cost is identifiable, i.e., it can be uniquely recovered from other OT quantities. In this work we provide sufficient and necessary conditions for the identifiability of the cost function on finite ground spaces. We find that such conditions correspond to the combinatorial structure of the corresponding linear program and discuss its computational complexity and implications for cost estimation in statistical linear models.
\end{abstract}

\noindent \textit{Keywords}: inverse optimal transport, identifiability, cost function, linear programming

\noindent \textit{MSC 2020 subject classification}: Primary 90C08; 52B12; 15A29; secondary 62J07; 62F12

\section{Introduction}

Numerous natural or social phenomena are often described through the minimization (or maximization) of a cost (or utility) function. In economics, for instance, consumers tend to select products that maximize their utility function (see \cite{GalichonBook}). The utility of a product might depend on ease of access, e.g.\ distance to the shop it is sold at, and its price and quantity there.
Besides, minimization of the total cost, e.g.\ encoding the distance on a graph or network, can provide understanding of the organization of biological or physical systems, see e.g.\ \cite{Schiebinger2019,Bunne2023,Bunne2024,Naas2024,Cang2023,Qu2024}. In fact, recently, the applications of optimal transport theory have significantly expanded, driven by the development of faster computational algorithms that enable the handling of larger datasets. This advancement, in turn, has facilitated its broader use in data science including statistics and machine learning, see e.g.\ \cite{Peyre2018,Kuhn2019,Montesuma2024,Panaretos2020,Sommerfeld2018} and the references given there.

Often these problems can be conveniently mathematically formulated in the optimal transport (OT) framework: Move mass from the target distribution $\mu$ to the source distribution $\nu$ (which are assumed here to have the same total mass) with the least amount of total effort according to some cost function $c$. In this paper, we focus on the situation that the distributions $\mu$ and $\nu$ are supported on a finite ground space, respectively, i.e., $\mu$ and $\nu$ can be regarded as probability vectors. To be more precise, for $N \in \N$ denote with $\Delta_N \defeq \{ x \in \R^N_+ : \sum_{i=1}^N x_i = 1 \}$ the $N$-dimensional probability simplex, where, $\R_+$ stands for the set of nonnegative real numbers. Given a cost matrix $c \in \R^{N \times M}$, the OT problem between two probability vectors $\mu \in \Delta_N$, $\nu \in \Delta_M$ is then to solve the minimization problem
\begin{equation} \label{eq:OT}
	\OT_c(\mu,\nu) \defeq \min_{\pi \in \Pi(\mu,\nu)} \inner{c}{\pi}\,,
\end{equation}
where $\inner{\argdot}{\argdot}$ is the Frobenius inner product and $\Pi(\mu,\nu)$ represents the set of probability matrices of size $N \times M$ with marginals $\mu$ and $\nu$, i.e.,
\begin{equation*}
    \Pi(\mu, \nu) \defeq \biggl\{  \pi \in \R^{N \times M}_+ : \sum_{s=1}^M \pi_{i,s} = \mu_i,\, \sum_{r = 1}^N \pi_{r,j} = \nu_j \text{ for all } i \in \idn{N},\, j \in \idn{M} \biggr\},
\end{equation*}
for $\idn{N} \defeq \{1, \ldots, N\}$. We call any minimizer $\pi$ of \eqref{eq:OT} an \textit{OT plan} between $\mu$ and $\nu$. The value $\OT_c(\mu,\nu)$ in \eqref{eq:OT} gives the \textit{total cost} of the OT problem. Furthermore, we have the dual formulation (cf.\ \cite{Luenberger2008})
\begin{equation} \label{eq:OT_dual}
	\OT_c(\mu, \nu) = \max_{(f,g) \in \Phi_c} \inner{f}{\mu} +  \inner{g}{\nu}\,,
\end{equation}
where $\Phi_c \defeq \{ (f, g) \in \R^N \times \R^M \mid f_{i} + g_{j} \leq c_{i,j} \text{ for all } i \in \idn{N},\,j \in \idn{M}\}$. The elements  of a maximizing pair of \eqref{eq:OT_dual} are called \textit{optimal potentials}.

In specific settings, the OT plan can represent an optimal assignment w.r.t.\ the cost $c$ given by an $N \times N$ matrix (here $N=M$) with only one nonzero entry in each row or column. In the discussed economic models, it could represent the optimal assignment of workers to firms or buyers to sellers. The theory of optimal transport on finite ground spaces is classical since the pioneering work of Kantorovitch \cite{Kantorovitch1942,Kantorovitch1958} and can be cast in the framework of linear programming \cite{Bertsimas1997,Luenberger2008}. See also \cite{Brezis2018} for an elementary proof of \eqref{eq:OT_dual}, and the references given there. Once the cost function is specified, the OT plan and total cost can be computed (numerically) from the marginals $\mu$ and $\nu$, see \cite{Peyre2018} for various methods.

However, in many instances, the cost function is (partially) unknown and must be inferred from the observed optimal matching. This is known as inverse optimal transport and was introduced by \cite{InverseSIam} to predict the flow of migrants using country-specific characteristics and pairwise measures
of dissimilarity between countries. Since then, inverse OT has received great attention, see e.g.\ \cite{Dupuy2016EstimatingMA,InverseJMLR2023,InversePMLR,InverseSIam,InverseCarlier,andrade2023sparsistency}, where further methods in more complex scenarios for estimation of the cost matrix are provided. We note that all the works cited above are concerned with approximate reconstruction of the cost $c$, typically involving entropic regularization. In this work, we focus on exact reconstruction.

Furthermore, going further back in the literature, as noted in \cite{Dupuy2016EstimatingMA}, related problems have already been studied in sociology and economics. In particular, in the so-called marriage market, one seeks to infer from the observed characteristics of married couples which attributes are most relevant for matching \cite{Becker}. Another interesting scenario has been brought to our attention by one of the referees: Suppose that we observe the OT plan, as well as \textit{one} of the optimal potentials. This could correspond to observing which buyer purchases which house, and at what price. Inferring the cost function from this would gain insight into the house preferences of buyers. More recent contributions include applications to contrastive learning \cite{InverseJMLR2023}, the characterization of the manifold of cross-ratio-equivalent costs together with an analysis of the implications of model priors and the derivation of an MCMC sampler \cite{InversePMLR}, migration modeling \cite{InverseCarlier,InverseSIam}, and global trade modeling \cite{gaskin.duncan.2025modellingglobaltradeoptimal}.

However, to put such methods on solid mathematical grounds, it is key to understand when the observed data uniquely determines the cost matrix, which is the goal of this work. That is, we describe conditions on the data so that  the cost matrix is identifiable, a terminology we borrowed from the theory of statistical models, where it has a long history (see e.g.\ \cite{Teicher1963,Lindsay1995,Koopmans1950}). We focus on finitely supported probability measures, which complements our previous work \cite{gonzalezsanz2024nonlinearinverseoptimaltransport} in the continuous case. Throughout most parts of the paper we will assume that these data are observed exactly without measurement noise, however, extensions to statistical models will be discussed in \autoref{Sec:Estimation}.

Our observations can be described as follows. We observe $K \in \N$ pairs of marginals $(\mu^{(k)},\nu^{(k)}) \in \Delta_N \times \Delta_M$, $k \in \idn{K}$, and (a subset of) the OT information w.r.t.\ the underlying cost matrix $c$:

\parbox[c]{0.8\linewidth}{
\begin{enumerate}[(i)]
    \item\label{enum:ot_cost} The corresponding total costs $\alpha^{(k)} \in \R$, $k \in \idn{K}$, i.e., the optimal values $\alpha^{(k)} = \OT_c(\mu^{(k)}, \nu^{(k)}) $ in \eqref{eq:OT},
    \item\label{enum:ot_plan} OT plans $\pi^{(k)} \in \Pi(\mu^{(k)}, \nu^{(k)})$, $k \in \idn{K}$, and
    \item\label{enum:ot_pot} optimal potentials $(f^{(k)}, g^{(k)}) \in \Phi_c$, $k \in \idn{K}$, such that
\end{enumerate}} \hfill \hypertarget{eq:obs}{(OBS)}

\begin{equation*}
	\alpha^{(k)} = \inner{c}{\pi^{(k)}} = \inner{f^{(k)}}{\mu^{(k)}} +  \inner{g^{(k)}}{\nu^{(k)}}\,.
\end{equation*}
Our aim is to recover the cost matrix $c$ from (a combination of) the quantities in \ref{enum:ot_cost}--\ref{enum:ot_pot}, which are either observed directly or are corrupted by noise (see \autoref{Sec:Estimation}). We call $c$ \textit{identifiable} if it is uniquely determined by (parts of) \hyperlink{eq:obs}{(OBS)}, i.e., if (a subset of) the system $\{\mu^{(k)}, \nu^{(k)}, \alpha^{(k)}, \pi^{(k)}, f^{(k)}, g^{(k)}\}_{k=1}^K$ uniquely determines $c$. Furthermore, we say that $c$ is \textit{consistent} with the observed OT information if \hyperlink{eq:obs}{(OBS)} holds. Note that, when considering identifiability, we assume that the underlying cost matrix $c$ exists, i.e., there is at least one cost matrix that is consistent with the observed OT information.

To avoid confusion of terminology, it is instructive to distinguish our notion of ``inverse optimal transport'' from inverse linear programming. Recall that the forward OT problem, i.e., the computation of the total cost $\OT_c(\mu, \nu)$ and a corresponding OT plan $\pi$ in \eqref{eq:OT} on finite spaces, amounts to solving a linear program (LP), i.e., for $A \in \R^{E \times D}$, $\vc{c} \in \R^{D}$ and $\vc{b} \in \R^{E}$ the minimization problem
\begin{equation*}
     \min_{\vc{x} \in P(\vc{b})} \inner{\vc{c}}{  \vc{x} }  \qquad \text{ where } \quad P(\vc{b}) \defeq \{ \vc{x} \in \R^{D} : A \vc{x} = \vc{b}\,, \vc{x} \geq 0 \} \,.
\end{equation*}
In contrast, given an observation $\vc{x}' \in \R^{D}$ and prior information $\vc{c}' \in \R^{D}$, the goal of inverse LP is to solve the following minimization problem
\begin{equation} \label{eq:inverse_lp}
    \min_{\vc{c}} \{ \norm{\vc{c} - \vc{c}'} : \vc{x}' \in  \argmin_{\vc{z} \in P(\vc{b})} \inner{\vc{c}}{\vc{z}} \}\,,
\end{equation}
where $\norm{\argdot}$ is some (semi-)norm on $\R^{D}$. Hence, inverse LP tries to find a $\vc{c}$ close to $\vc{c}'$ such that $\vc{x}'$ is an optimal solution to the corresponding LP whereas inverse OT aims to recover $c$ from (a combination of) quantities in \ref{enum:ot_cost}--\ref{enum:ot_pot}. In particular, when the LP is the OT problem, then \eqref{eq:inverse_lp} corresponds to the inverse OT problem in the case that only one OT plan \ref{enum:ot_plan} is observed, and the objective is assumed to be constant (instead of $c \mapsto \norm{c - c'}$). The inverse LP problem has been intensively studied in the optimization literature, see e.g.\ \cite{DempeInverLP,Chan2023,Ahuja2001,Iyengar2005,JianzhongZhang1996,Zhang1999,Tayyebi2018} and references therein. As for inverse OT, most of the work focuses on algorithmic methods to obtain a solution $\vc{c}$ to \eqref{eq:inverse_lp}. To put inverse LP into the perspective of the present work, note that identifiability of a solution to the inverse LP problem \eqref{eq:inverse_lp} does not hold in general and has not been considered so far. Similarly, this is the case for noisy inverse LP, see e.g.\ \cite{Aswani2018,Chan2019,Chan2023}. Furthermore, inverse LP often assumes the observation of a single solution of the LP. Our problem considered is different, however, it can be cast in the framework of inverse problems where a forward operator has to be inverted. To the best of our knowledge, the only result relevant for identifiability in the sense of the present paper is given in \cite{Tavaslioglu2018} where they characterize the set of consistent cost matrices in the case where only one (meaning $K=1$) OT plan \ref{enum:ot_plan} is observed. This, however, does not generalize to $K > 1$ and is thus of limited usability for identifiability, see \autoref{rem:inv_feas_reg} for further discussion.

To be concise, we present our results in the OT framework, but they can also be formulated for more general linear programs. We discuss this in \autoref{rem:lin_prog}.

\subsection{Summary of main results}
The main finding of this paper is that identifiability is governed by the polyhedral geometry of the associated transport polytopes, and more precisely by the structure of their optimal faces. The situation is quite complex and therefore we will address this in five separate cases.

\paragraph{Scenario A} When only the total costs \ref{enum:ot_cost} are observed, identifiability is characterized by the unique solvability of the family of linear programs associated with all admissible choices of optimal extreme points (cf.~\autoref{thm:ident_only_total}).

\paragraph{Scenario B} When optimal potentials \ref{enum:ot_pot} are additionally observed\footnote{As knowing the optimal potentials (together with the marginals) implies knowledge of the total costs, scenario B can be equivalently stated as only observing the optimal potentials.}, the primal-dual optimality relations impose further linear restrictions and yield a sharper necessary and sufficient criterion (see \autoref{thm:ident_only_pot}).
\begin{figure}
    \centering

\tikzset{every picture/.style={line width=0.5pt}} 

\begin{tikzpicture}[x=0.5pt,y=0.5pt,yscale=-1,xscale=1]

\draw  [fill={rgb, 255:red, 184; green, 233; blue, 134 }  ,fill opacity=0.42 ] (153.75,157.92) -- (70.56,143.38) -- (52.46,99.06) -- (124.46,86.21) -- (187.06,122.59) -- cycle ;
\draw    (187.06,122.59) -- (208.06,194.59) ;
\draw    (153.75,157.92) -- (157,233) ;
\draw    (70.56,143.38) -- (60,232) ;
\draw    (52.46,99.06) -- (24,190) ;
\draw   (369.75,157.92) -- (286.56,143.38) -- (268.46,99.06) -- (340.46,86.21) -- (403.06,122.59) -- cycle ;
\draw    (403.06,122.59) -- (424.06,194.59) ;
\draw    (369.75,157.92) -- (373,233) ;
\draw    (286.56,143.38) -- (276,232) ;
\draw    (268.46,99.06) -- (240,190) ;
\draw   (586.75,159.92) -- (503.56,145.38) -- (485.46,101.06) -- (557.46,88.21) -- (620.06,124.59) -- cycle ;
\draw    (620.06,124.59) -- (641.06,196.59) ;
\draw    (586.75,159.92) -- (590,235) ;
\draw    (503.56,145.38) -- (493,234) ;
\draw    (485.46,101.06) -- (457,192) ;
\draw [color={rgb, 255:red, 126; green, 211; blue, 33 }  ,draw opacity=1 ][line width=1.5]    (286.56,143.38) -- (369.75,157.92) ;
\draw  [color={rgb, 255:red, 65; green, 117; blue, 5 }  ,draw opacity=1 ][fill={rgb, 255:red, 65; green, 117; blue, 5 }  ,fill opacity=1 ] (117.66,121.83) .. controls (117.66,119.25) and (119.75,117.16) .. (122.33,117.16) .. controls (124.91,117.16) and (127,119.25) .. (127,121.83) .. controls (127,124.41) and (124.91,126.51) .. (122.33,126.51) .. controls (119.75,126.51) and (117.66,124.41) .. (117.66,121.83) -- cycle ;
\draw  [color={rgb, 255:red, 65; green, 117; blue, 5 }  ,draw opacity=1 ][fill={rgb, 255:red, 65; green, 117; blue, 5 }  ,fill opacity=1 ] (328.15,150.65) .. controls (328.15,148.07) and (330.24,145.98) .. (332.83,145.98) .. controls (335.41,145.98) and (337.5,148.07) .. (337.5,150.65) .. controls (337.5,153.23) and (335.41,155.33) .. (332.83,155.33) .. controls (330.24,155.33) and (328.15,153.23) .. (328.15,150.65) -- cycle ;
\draw  [color={rgb, 255:red, 65; green, 117; blue, 5 }  ,draw opacity=1 ][fill={rgb, 255:red, 65; green, 117; blue, 5 }  ,fill opacity=1 ] (582.75,159.92) .. controls (582.75,157.34) and (584.84,155.25) .. (587.42,155.25) .. controls (590,155.25) and (592.09,157.34) .. (592.09,159.92) .. controls (592.09,162.5) and (590,164.6) .. (587.42,164.6) .. controls (584.84,164.6) and (582.75,162.5) .. (582.75,159.92) -- cycle ;

\draw (80,211) node [anchor=north west][inner sep=0.75pt]   [align=left] {$\displaystyle \Pi ( \mu ,\nu )$};
\draw (298,212) node [anchor=north west][inner sep=0.75pt]   [align=left] {$\displaystyle \Pi ( \mu ,\nu )$};
\draw (517,210) node [anchor=north west][inner sep=0.75pt]   [align=left] {$\displaystyle \Pi ( \mu ,\nu )$};
\draw (101,101) node [anchor=north west][inner sep=0.75pt]   [align=left] {$\displaystyle \pi $};
\draw (326,123) node [anchor=north west][inner sep=0.75pt]   [align=left] {$\displaystyle \pi $};
\draw (577,134) node [anchor=north west][inner sep=0.75pt]   [align=left] {$\displaystyle \pi $};

\end{tikzpicture}
    \caption{Illustration of the information obtained depending on the position of the observed plan. In all cases,  the smallest face (in light green) that contains the optimal plan $\pi$ (in dark green) is contained in the set of solutions. However, the situation displayed in the figure on the left provides more information than the face on the right.}
    \label{fig:Figure:polytope-intro}
\end{figure}

\paragraph{Scenario C} In the regime where only the optimal plans \ref{enum:ot_plan} are observed, the information content of a given observation depends on the position of the observed plan within the optimality set. \autoref{pr:ri} implies that one can recover the smallest face of the transport polytope containing the observed plan, and that this face is contained in the corresponding optimal face. Consequently, if the observed plan lies in the relative interior of the optimal face, then this minimal face coincides with the whole optimal face, so the latter is recovered exactly from the observation (see \autoref{fig:Figure:polytope-intro}). This is therefore the most informative configuration. By contrast, if the observed plan is itself an extreme point---as is the case, for instance, for outputs of the simplex algorithm---then one only recovers a single optimal vertex, which is the least informative situation. Because of shift invariance \eqref{eq:shift_equiv}, observing plans alone yields identifiability only in the quotient space modulo additive shifts, up to a residual linear subspace whose dimension is determined by the span generated by the observed plans and the extreme points of the recovered faces (see \autoref{thm:ident_only_plans}).

\paragraph{Scenario D} Once total costs \ref{enum:ot_cost} are added, this obstruction disappears and  \autoref{thm:ident_total_plan_eq} shows that identifiability reduces to the unique solvability of a linear system.

\paragraph{Scenario E} If we only observe the OT plans \ref{enum:ot_plan} and one of the optimal potentials \ref{enum:ot_pot}, we are again in a scenario where the cost matrix is only identifiable up to certain additive shifts. In this case, we find that we have identifiability if the supports of the OT plans are sufficiently overlapping (\autoref{thm:ident_plans_one_pot}).

\paragraph{Scenario F} Under full information \ref{enum:ot_cost}--\ref{enum:ot_pot}, we obtain a complete characterization: the cost matrix is identifiable if and only if every matrix entry is covered by the support of at least one observed optimal plan (cf.~\autoref{thm:ident_plan_pot}).

Further, in each of the above scenarios we also discuss the computational complexity of the identifiability criteria.

In some cases, we may have prior information about the cost function's structure. An important example is the symmetry of the cost matrix. This will be addressed in \autoref{Sec:struct}, where we give identifiability conditions under the observation of total costs and OT plans (see \autoref{thm:ident_total_plan_eq_sym}) and under the exclusive observation of OT plans (see \autoref{thm:ident_only_plans_sym}).

As an application of our theory, in \autoref{Sec:Estimation} we propose an estimator for the cost function and derive consistency results and asymptotic normality in a statistical setting, as well as asymptotic confidence sets for the cost matrix, see \autoref{thm:CLT}. Finally, \autoref{thm:BoundsLasso} is specifically tailored to sparse costs and we give recovery guarantees in this case. To this end we introduce a modified notion of sparsity that is adapted to OT and that enjoys corresponding results to statistical sparse recovery.

\begin{table}
	\centering
	\begin{tabular}{c | c c c c c c c c}
	  Subsection & \ref{Sec:TotalCost} & \ref{Sec:OTpotent} & \ref{Sec:OTpotent} & \ref{Sec:Plans} & \ref{Sec:planCost} & \ref{Sec:plans_one_pot} & \ref{Sec:FullInf} & \ref{Sec:FullInf} \\
	  Scenario & A & B & B  &  C & D & E & F & F \\\hline \rule{0pt}{2.6ex}
	 Total costs \ref{enum:ot_cost}       & $\times$ &          & $\times$ &          & $\times$ &           &          & $\times$ \\
	 OT plans \ref{enum:ot_plan}          &          &          &          & $\times$ & $\times$ & $\times$  & $\times$ & $\times$ \\
	 Optimal potentials \ref{enum:ot_pot} &          & $\times$ & $\times$ &          &          & $*$          & $\times$ & $\times$
\end{tabular}
	\caption{Subsections for the different scenarios of observing different OT related quantities. A ``$\times$'' indicates that the corresponding quantities are observed in the scenario and ``$*$'' means that we only observe one of the optimal potentials.} \label{tab:sections}
\end{table}

\subsection{Organization of the Work}\label{sec:Organization}

The rest of the paper is organized as follows: In \autoref{sec:Main}, we present our main results. This section is divided into subsections, each of which analyzes the identifiability of the cost function under scenarios A--F of observing different OT related quantities in \ref{enum:ot_cost}--\ref{enum:ot_pot}. They are organized such that gradually information is added such that identifiability becomes easier. In \autoref{sec:Notation}, we introduce the notation used throughout the paper, as well as some general results that will be utilized later. The main result of \autoref{Sec:TotalCost} is \autoref{thm:ident_only_total}, which provides necessary and sufficient conditions for identifiability when only the total costs \ref{enum:ot_cost} are observed (Scenario A). In \autoref{Sec:OTpotent}, we derive the necessary and sufficient conditions for identifiability when only the potentials \ref{enum:ot_pot} are observed (Scenario B, see \autoref{thm:ident_only_pot}). Then, \autoref{thm:ident_only_plans} in \autoref{Sec:Plans} states the identifiability conditions in the setting of only observing the transport plans \ref{enum:ot_plan} (Scenario C). \autoref{Sec:planCost} states necessary and sufficient conditions under the observation of both total costs \ref{enum:ot_cost} and transport plans \ref{enum:ot_plan} (Scenario D, see \autoref{thm:ident_total_plan}). In \autoref{Sec:plans_one_pot} we give a sufficient condition when only the OT plans \ref{enum:ot_plan} and one of the optimal potentials \ref{enum:ot_pot} are observed (Scenario E, \autoref{thm:ident_plans_one_pot}). Finally, the case of full information \ref{enum:ot_cost}--\ref{enum:ot_pot} (Scenario F) is covered by \autoref{thm:ident_plan_pot} in \autoref{Sec:FullInf}, where necessary and sufficient conditions are given. To ease navigation, \autoref{tab:sections} summarizes which subsection deals with which scenario. In \autoref{Sec:struct} we deal with symmetric cost matrices and \autoref{Sec:Estimation} is devoted to statistical applications.

\section{Main Results. Identifiability of the Cost Matrix}\label{sec:Main}

This section contains the main results of this paper. In each subsection we derive necessary and sufficient conditions for the identifiability of the cost matrix $c$ considering the different possible combinations of the available information. We refer to \autoref{sec:Organization} for the organization of the different subsections. Note that in all the cases the marginals are assumed to be known (e.g.\ implicitly given by the knowledge of OT plans).

\subsection{Notation and Preliminary Results}\label{sec:Notation}

As the OT problem \eqref{eq:OT} is an instance of a linear program (LP), we can make use of the geometry of LPs to achieve identifiability of the cost matrix $c$. To this end, we first introduce the needed tools and terminology from linear programming \cite{Bertsimas1997,Luenberger2008}.

\begin{figure}
\centering

\tikzset{every picture/.style={line width=0.75pt}} 

\begin{tikzpicture}[x=0.75pt,y=0.75pt,yscale=-1,xscale=1]

\draw  [fill={rgb, 255:red, 219; green, 219; blue, 219 }  ,fill opacity=1 ] (424.92,81.76) -- (478.3,93.3) -- (461.3,128.3) -- (402.3,122.3) -- (378.3,93.3) -- cycle ;
\draw [fill={rgb, 255:red, 219; green, 219; blue, 219 }  ,fill opacity=1 ]   (378,131.8) -- (378.3,93.3) -- (402.3,122.3) -- (402.6,156) ;
\draw [fill={rgb, 255:red, 219; green, 219; blue, 219 }  ,fill opacity=1 ]   (402.6,156) -- (402.3,122.3) -- (461.3,128.3) -- (468.6,153.4) ;
\draw [fill={rgb, 255:red, 219; green, 219; blue, 219 }  ,fill opacity=1 ]   (468.6,153.4) -- (461.3,128.3) -- (478.3,93.3) -- (477,134.6) ;
\draw  [fill={rgb, 255:red, 0; green, 0; blue, 0 }  ,fill opacity=1 ] (376.89,93.3) .. controls (376.89,92.52) and (377.52,91.89) .. (378.3,91.89) .. controls (379.08,91.89) and (379.71,92.52) .. (379.71,93.3) .. controls (379.71,94.08) and (379.08,94.71) .. (378.3,94.71) .. controls (377.52,94.71) and (376.89,94.08) .. (376.89,93.3) -- cycle ;
\draw  [fill={rgb, 255:red, 0; green, 0; blue, 0 }  ,fill opacity=1 ] (423.51,81.76) .. controls (423.51,80.98) and (424.14,80.35) .. (424.92,80.35) .. controls (425.69,80.35) and (426.32,80.98) .. (426.32,81.76) .. controls (426.32,82.54) and (425.69,83.17) .. (424.92,83.17) .. controls (424.14,83.17) and (423.51,82.54) .. (423.51,81.76) -- cycle ;
\draw  [fill={rgb, 255:red, 0; green, 0; blue, 0 }  ,fill opacity=1 ] (476.89,93.3) .. controls (476.89,92.52) and (477.52,91.89) .. (478.3,91.89) .. controls (479.08,91.89) and (479.71,92.52) .. (479.71,93.3) .. controls (479.71,94.08) and (479.08,94.71) .. (478.3,94.71) .. controls (477.52,94.71) and (476.89,94.08) .. (476.89,93.3) -- cycle ;
\draw  [fill={rgb, 255:red, 0; green, 0; blue, 0 }  ,fill opacity=1 ] (400.89,122.3) .. controls (400.89,121.52) and (401.52,120.89) .. (402.3,120.89) .. controls (403.08,120.89) and (403.71,121.52) .. (403.71,122.3) .. controls (403.71,123.08) and (403.08,123.71) .. (402.3,123.71) .. controls (401.52,123.71) and (400.89,123.08) .. (400.89,122.3) -- cycle ;
\draw  [fill={rgb, 255:red, 0; green, 0; blue, 0 }  ,fill opacity=1 ] (459.89,128.3) .. controls (459.89,127.52) and (460.52,126.89) .. (461.3,126.89) .. controls (462.08,126.89) and (462.71,127.52) .. (462.71,128.3) .. controls (462.71,129.08) and (462.08,129.71) .. (461.3,129.71) .. controls (460.52,129.71) and (459.89,129.08) .. (459.89,128.3) -- cycle ;
\draw  [fill={rgb, 255:red, 219; green, 219; blue, 219 }  ,fill opacity=1 ] (281.92,95.36) -- (335.3,106.9) -- (318.3,141.9) -- (259.3,135.9) -- (235.3,106.9) -- cycle ;
\draw  [fill={rgb, 255:red, 0; green, 0; blue, 0 }  ,fill opacity=1 ] (233.89,106.9) .. controls (233.89,106.12) and (234.52,105.49) .. (235.3,105.49) .. controls (236.08,105.49) and (236.71,106.12) .. (236.71,106.9) .. controls (236.71,107.68) and (236.08,108.31) .. (235.3,108.31) .. controls (234.52,108.31) and (233.89,107.68) .. (233.89,106.9) -- cycle ;
\draw  [fill={rgb, 255:red, 0; green, 0; blue, 0 }  ,fill opacity=1 ] (280.51,95.36) .. controls (280.51,94.58) and (281.14,93.95) .. (281.92,93.95) .. controls (282.69,93.95) and (283.32,94.58) .. (283.32,95.36) .. controls (283.32,96.14) and (282.69,96.77) .. (281.92,96.77) .. controls (281.14,96.77) and (280.51,96.14) .. (280.51,95.36) -- cycle ;
\draw  [fill={rgb, 255:red, 0; green, 0; blue, 0 }  ,fill opacity=1 ] (333.89,106.9) .. controls (333.89,106.12) and (334.52,105.49) .. (335.3,105.49) .. controls (336.08,105.49) and (336.71,106.12) .. (336.71,106.9) .. controls (336.71,107.68) and (336.08,108.31) .. (335.3,108.31) .. controls (334.52,108.31) and (333.89,107.68) .. (333.89,106.9) -- cycle ;
\draw  [fill={rgb, 255:red, 0; green, 0; blue, 0 }  ,fill opacity=1 ] (257.89,135.9) .. controls (257.89,135.12) and (258.52,134.49) .. (259.3,134.49) .. controls (260.08,134.49) and (260.71,135.12) .. (260.71,135.9) .. controls (260.71,136.68) and (260.08,137.31) .. (259.3,137.31) .. controls (258.52,137.31) and (257.89,136.68) .. (257.89,135.9) -- cycle ;
\draw  [fill={rgb, 255:red, 0; green, 0; blue, 0 }  ,fill opacity=1 ] (316.89,141.9) .. controls (316.89,141.12) and (317.52,140.49) .. (318.3,140.49) .. controls (319.08,140.49) and (319.71,141.12) .. (319.71,141.9) .. controls (319.71,142.68) and (319.08,143.31) .. (318.3,143.31) .. controls (317.52,143.31) and (316.89,142.68) .. (316.89,141.9) -- cycle ;
\draw  [fill={rgb, 255:red, 0; green, 0; blue, 0 }  ,fill opacity=1 ] (135.69,126.3) .. controls (135.69,125.52) and (136.32,124.89) .. (137.1,124.89) .. controls (137.88,124.89) and (138.51,125.52) .. (138.51,126.3) .. controls (138.51,127.08) and (137.88,127.71) .. (137.1,127.71) .. controls (136.32,127.71) and (135.69,127.08) .. (135.69,126.3) -- cycle ;
\draw  [fill={rgb, 255:red, 0; green, 0; blue, 0 }  ,fill opacity=1 ] (182.31,114.76) .. controls (182.31,113.98) and (182.94,113.35) .. (183.72,113.35) .. controls (184.49,113.35) and (185.12,113.98) .. (185.12,114.76) .. controls (185.12,115.54) and (184.49,116.17) .. (183.72,116.17) .. controls (182.94,116.17) and (182.31,115.54) .. (182.31,114.76) -- cycle ;
\draw    (138.51,126.3) -- (183.72,114.76) ;
\draw  [fill={rgb, 255:red, 0; green, 0; blue, 0 }  ,fill opacity=1 ] (88.89,120.1) .. controls (88.89,119.32) and (89.52,118.69) .. (90.3,118.69) .. controls (91.08,118.69) and (91.71,119.32) .. (91.71,120.1) .. controls (91.71,120.88) and (91.08,121.51) .. (90.3,121.51) .. controls (89.52,121.51) and (88.89,120.88) .. (88.89,120.1) -- cycle ;

\end{tikzpicture}

\caption{Four different polytopes (from left to right): A zero-dimensional polytope (point), one-dimensional polytope (line segment), two-dimensional polytope (polygon) and part of a three-dimensional polytope (polyhedron). Note that each depicted polytope is also a face of the higher-dimensional ones and the convex hull of its extreme points.} \label{fig:polytope}

\end{figure}

Let $(\mu, \nu) \in \Delta_N \times \Delta_M$ be a pair of marginals and $c \in \R^{N \times M}$ a cost matrix. We denote the by column-major order vectorized version of $c \in \R^{N \times M}$ by
\begin{equation*}
	\vc{c} \defeq (\vc{c}_1, \dots, \vc{c}_{NM})^\transp
	\defeq (c_{1,1}, \dots, c_{N,1}, \dots,  c_{1,M}, \dots, c_{N,M})^\transp \in \R^{N M}\,.
\end{equation*} The set of transport plans $\Pi(\mu, \nu)$ is a (bounded) polytope. Furthermore, denote the non-empty and compact set of OT plans with
\begin{equation*}
    \OPT_c(\mu, \nu )\defeq \argmin_{\pi \in \Pi(\mu,\nu)} \inner{c}{\pi}\,.
\end{equation*}
A subset $F \subseteq \Pi(\mu, \nu)$ is called a face if there exists a cost matrix $c$ such that $F = \OPT_c(\mu, \nu)$. An element $\pi \in \Pi(\mu, \nu)$ is called an extreme point (or vertex) if $\{ \pi \}$ is a face or equivalently, if $\pi$ is not a convex combination of two different points in $\Pi(\mu, \nu)$. Denote the set of extreme points of $\Pi(\mu, \nu)$ with $\EP(\mu, \nu)$. As a shortcut, we may write for $\pi \in \Pi(\mu, \nu)$ that $\Pi(\pi) \defeq \Pi(\mu, \nu)$ and $\EP(\pi) \defeq \EP(\mu, \nu)$ as well as $\OPT_c(\pi) = \OPT_c(\mu, \nu)$. It follows that every face $F$ of $\Pi(\mu, \nu)$, including $\Pi(\mu, \nu)$ itself, is the convex hull of a unique set of extreme points, i.e., there exist $u^{(1)}, \ldots, u^{(L_F)} \in \EP(\mu, \nu)$ such that
\begin{equation*}
    F = \coh \{ u^{(\ell)} : \ell \in \idn{L_F} \} \defeq \left\{ \sum_{\ell=1}^{L_F} \lambda_\ell\, u^{(\ell)} : \lambda \in \Delta_{L_F} \right\} \,.
\end{equation*}
Here, $(L_F-1)$ is the dimension of the face $F$. See \autoref{fig:polytope} for examples of polytopes and faces. In particular, when observing the OT plans \ref{enum:ot_plan} we can find the faces with minimal dimension that contain them. These faces must then be contained in the set of optimizers, see \autoref{fig:Figure:polytope-intro}. We summarize this observation for later use in the following:

\begin{property}{RI} \label{pr:ri}
    Suppose that we observe $K$ pairs of marginals and the corresponding OT plans \ref{enum:ot_plan}. Then, for each $k \in \idn{K}$, there exist $L_k \in \N$ and $u^{(1, k)}, \ldots, u^{(L_k,k)} \in \EP(\mu^{(k)}, \nu^{(k)})$ such that
    \begin{equation*}
 	 \pi^{(k)} \in \left\{ \sum_{\ell=1}^{L_k} \lambda_\ell u^{(\ell, k)} : \lambda \in \Delta_{L_k}, \lambda > 0 \right\}
    \end{equation*}
	and $\coh \{ u^{(\ell, k)} : \ell \in \idn{L_k} \} \subseteq \OPT_c(\pi^{(k)})$.
\end{property}

\begin{remark}[Information content of \ref{pr:ri}] \label{rem:more_vertex_better}
	\autoref{pr:ri} shows that $\pi^{(k)}$ carries more information about $\OPT_c(\mu^{(k)}, \nu^{(k)})$ for bigger $L_k$. The least informative case is $L_k = 1$ when $\pi^{(k)}$ is an extreme point, and the most informative case is when $\pi^{(k)}$ lies in the relative interior of $\OPT_c(\mu^{(k)}, \nu^{(k)})$. In particular, if the OT plan $\pi^{(k)}$ is computed with the simplex algorithm, then it is an extreme point and \autoref{pr:ri} is least informative.
\end{remark}

\begin{remark}[Computation of \ref{pr:ri}] \label{rem:comp_ri}
    Note that \autoref{pr:ri} requires explicit knowledge of the extreme points $\EP(\mu, \nu)$. Given a pair of marginals $(\mu, \nu)$, the polytope $\Pi(\mu, \nu)$ is determined via a number of linear equations. Calculating the extreme points from this is called the vertex enumeration problem. There are various algorithms available for this, see e.g.\ \cite{Avis1992,Bremner1998,Dyer1977} and references therein. Naturally, the computational complexity depends on the number of vertices $V$ of which there can be exponentially (in $N$, $M$) many. For example, the well-known Birkhoff polytope (for $N = M$) has $V = N!$ vertices. Hence, the enumeration of the vertices is, in general, computationally expensive. For instance, the Avis--Fukuda algorithm \cite{Avis1992} takes $\bigO(N^3 V)$ time (for $N = M$). However, note that if only the extreme points $\{ u^{(\ell, k)} : \ell \in \idn{L_k}\}$ in \autoref{pr:ri} are needed, we do not have to compute the whole set of extreme points $\EP(\pi^{(k)})$. Namely, it suffices to only consider the sub-polytope
	\begin{equation*}
		\tilde{\Pi}(\pi^{(k)}) \defeq \{ \pi \in \Pi(\pi^{(k)}) : \supp \pi \subseteq \supp \pi^{(k)} \}\,,
	\end{equation*}
	where $\supp \pi^{(k)}$ is defined in \eqref{eq:supp}, i.e., run the vertex enumeration algorithm on this sub-polytope (here $V = L_k$). This can decrease the computational complexity, however, the worst case $V = \abs{\EP(\pi^{(k)})}$ is still possible.
\end{remark}

As an instance of an LP, the OT problem has the primal \eqref{eq:OT} and dual formulation \eqref{eq:OT_dual}. By general LP theory, their solutions are linked in the following way \cite[Section~4.4]{Luenberger2008}:

\begin{lemma}[Primal-dual optimality criterion] \label{lemma:primal_dual_opt}
	Let $(\mu, \nu) \in \Delta_N \times \Delta_M$ be a pair of marginals and $c \in \R^{N \times M}$ a cost matrix. A transport plan $\pi \in \Pi(\mu, \nu)$ and potentials $(f, g) \in \Phi_c$ are optimal if and only if for all $(i, j) \in \idn{N} \times \idn{M}$ it holds that $\pi_{i, j} > 0$ implies $c_{i, j} = f_i + g_j$.
\end{lemma}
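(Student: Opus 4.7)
The plan is to establish the equivalence by computing the duality gap explicitly and applying strong duality, which is already available through \eqref{eq:OT_dual}.

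First I would evaluate the gap $\inner{c}{\pi} - (\inner{f}{\mu} + \inner{g}{\nu})$ for any feasible $\pi \in \Pi(\mu,\nu)$ and $(f,g) \in \Phi_c$. Using the marginal constraints, $\inner{f}{\mu} = \sum_i f_i \sum_j \pi_{i,j} = \sum_{i,j} f_i\, \pi_{i,j}$ and similarly $\inner{g}{\nu} = \sum_{i,j} g_j\, \pi_{i,j}$, so the gap rewrites as
\begin{equation*}
    \inner{c}{\pi} - \inner{f}{\mu} - \inner{g}{\nu} = \sum_{(i,j) \in \idn{N} \times \idn{M}} (c_{i,j} - f_i - g_j)\, \pi_{i,j}.
\end{equation*}
Every term in this sum is nonnegative, since $\pi_{i,j} \geq 0$ by feasibility of $\pi$ and $c_{i,j} - f_i - g_j \geq 0$ by $(f,g) \in \Phi_c$.

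Next I would invoke strong duality \eqref{eq:OT_dual}, which tells us that $(\pi, f, g)$ are simultaneously optimal in the primal and dual if and only if this gap vanishes. Because the sum consists of nonnegative terms, it vanishes if and only if $(c_{i,j} - f_i - g_j)\, \pi_{i,j} = 0$ for every $(i,j)$, which is exactly the stated complementary slackness condition: $\pi_{i,j} > 0 \Rightarrow c_{i,j} = f_i + g_j$.

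For the two directions: in the "only if" direction, optimality of both pairs together with strong duality forces the gap to be zero, hence complementary slackness. In the "if" direction, complementary slackness forces the gap to be zero, so $\inner{c}{\pi} = \inner{f}{\mu} + \inner{g}{\nu}$; since for any feasible competitor $\pi' \in \Pi(\mu,\nu)$ the dual value $\inner{f}{\mu}+\inner{g}{\nu}$ is a lower bound on $\inner{c}{\pi'}$ (by the same nonnegativity argument applied to $\pi'$), $\pi$ attains the primal minimum, and dually $(f,g)$ attains the dual maximum. There is no real obstacle here; the argument is pure complementary slackness and the only non-trivial ingredient is strong duality, which has already been stated in \eqref{eq:OT_dual}.
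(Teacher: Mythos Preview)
Your proof is correct and is the standard complementary slackness argument. The paper itself does not prove this lemma; it simply cites it as a well-known fact from linear programming theory (\cite[Section~4.4]{Luenberger2008}), so your proposal supplies exactly the routine proof that the authors chose to omit.
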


\autoref{lemma:primal_dual_opt}  links identifiability of the cost matrix $c$ to the support of an OT plan $\pi$, denoted as
\begin{equation} \label{eq:supp}
	\supp \pi \defeq \{ (i, j) \in \idn{N} \times \idn{M} : \pi_{i, j} > 0 \},
\end{equation}
and knowledge of the optimal potentials $f$ and $g$. Upon defining the outer sum
\begin{equation*}
	f \oplus g \defeq [f_i + g_j]_{i,j=1}^{N,M},
\end{equation*}
the condition in \autoref{lemma:primal_dual_opt} can be stated as $f \oplus g = c$ on $\supp \pi$.

 We say that two cost matrices $c,\, c' \in \R^{N \times M}$ are shift equivalent if
\begin{equation} \label{eq:shift_equiv}
	c \shifteq c' \quad \defiff \quad c - c' = a \oplus b \quad \text{for some vectors }	a \in \R^N\,, b \in \R^M\,.
\end{equation}
Note that this defines an equivalence relation on $\R^{N \times M}$. Denote with $\shiftc^{N \times M} \defeq \R^{N \times M} / { \shifteq }$ the corresponding quotient space. When we say that $c$ is identifiable in $\shiftc^{N \times M}$, we mean that $c$ is identifiable in $\R^{N \times M}$ up to shift equivalence. This definition is motivated by the fact that for any pair of marginals $(\mu, \nu) \in \Delta_N \times \Delta_M$ it holds for $c \shifteq c'$ that $\OPT_{c}(\mu, \nu) = \OPT_{c'}(\mu, \nu)$, i.e., the OT w.r.t.\ shift equivalent cost matrices has the same OT plans. Therefore, for identifiability of the cost matrix $c$ it is crucial to know the total costs \ref{enum:ot_cost}.  In particular, when only observing the OT plans \ref{enum:ot_plan}, we cannot achieve more than identifiability in $\shiftc^{N \times M}$. We note that the linear space of shifts of the form $a \oplus b$ has dimension $N + M - 1$.

\begin{remark}[Generalization to Linear Programs] \label{rem:lin_prog}
    The considerations in this section are a particular case of the more general linear program \cite{Bertsimas1997,Luenberger2008} with fixed (and known) constraint matrix $A \in \R^{E \times D}$: For $\vc{c} \in \R^{D}$, $\vc{b} \in \R^{E}$ consider the primal problem
    \begin{equation*}
         \LP_{\vc{c}}(\vc{b}) \defeq \min_{\vc{x} \in P(\vc{b})} \inner{\vc{c}}{  \vc{x} }\,,
    \end{equation*}
    where the polytope $P(\vc{b}) \defeq \{ \vc{x} \in \R^D : A\vc{x}= \vc{b},\, \vc{x} \geq 0 \}$ is bounded and non-empty. The corresponding dual problem reads
	\begin{equation*}
		\LP_{\vc{c}}(\vc{b}) = \max_{\vc{y} \in P^*(\vc{c})} \inner{\vc{b}}{\vc{y}} \,,
	\end{equation*}
	where $P^*(\vc{c}) \defeq \{ \vc{y} \in \R^{E} : A^\transp \vc{y} \leq \vc{c} \}$.

    Here, we observe $K$ vectors $\vc{b}^{(k)} \in \R^{E}$, $k \in \idn{K}$, with corresponding primal and dual solutions $\vc{x}^{(k)} \in P_A(\vc{b^{(k)}})$ and $\vc{y}^{(k)} \in P^*(\vc{c})$ as well as the optimal values $\alpha^{(k)} \in \R$, i.e.,
	\begin{equation*}
		\alpha^{(k)} = \LP_{\vc{c}}(\vc{b}^{(k)}) = \inner{\vc{c}}{\vc{x}^{(k)}} = \inner{\vc{b}^{(k)}}{\vc{y}^{(k)}}\,.
	\end{equation*}
	From (a subset of) this information we want to uniquely determine the underlying $\vc{c}$.

	Strong duality yields the analog to \autoref{lemma:primal_dual_opt}: $\vc{x} \in P(\vc{b})$ and $\vc{y} \in P^*(\vc{c})$ are optimal if and only if $\inner{\vc{c}}{\vc{x}} = \inner{\vc{b}}{\vc{y}}$, or equivalently if $\vc{x}_i > 0$ for some $i \in \idn{D}$ implies that $(A^\transp \vc{y})_i = \vc{c}_i$.

    The more general formulation of shift equivalence is as follows: Write $\vc{c} \sim_{A^\transp} \vc{c'}$ if $\vc{c} - \vc{c'} = A^\transp \vc{y}$ for some $\vc{y} \in \R^E$. As it holds for all $\vc{x} \in P(\vc{b})$ that
    \begin{equation*}
        \inner{\vc{c} + A^\transp\vc{y}}{\vc{x}} = \inner{\vc{c}}{\vc{x}} + \inner{\vc{y}}{A \vc{x}} = \inner{\vc{c}}{\vc{x}} + \inner{\vc{y}}{\vc{b}}\,,
    \end{equation*}
    we see that the shift of $\vc{c}$ by $A^\transp \vc{y}$ does not change the set of minimizers of the linear program. Note that these shifts lie in the image of the linear operator $A^\transp$.
\end{remark}

Finally, we introduce the (to OT specific) concept of Monge matrices (for an overview, see \cite{burkard1996perspectives}). A cost matrix $c \in \R^{N \times M}$ satisfies the so-called Monge property if it holds that
\begin{equation} \label{eq:monge_prop}
	c_{i,j} + c_{r,s} \leq c_{i,s} + c_{r,j} \qquad \text{for all } 1 \leq i < r \leq N,\, 1 \leq j < s \leq M\,.
\end{equation}

\begin{example}[Monge matrices] \label{ex:monge_mat}
	Let $h : \R \to \R$ be convex and $\{ x_1, \ldots, x_N \}$, $\{y_1, \ldots, y_M \}$ be increasingly sorted sets of points in $\R$. Then, the induced cost matrix with $c_{i,j} = h(x_i - y_j)$ for all $(i, j) \in \idn{N} \times \idn{M}$ satisfies the Monge property. This includes cost matrices of the form $c_{i,j} = \abs{x_i - y_j}^p$ for $p \geq 1$.
\end{example}

When the cost matrix $c$ is Monge, the north-west-corner rule always yields an OT plan \cite{Hoffman1963}. Hence, observing $K$ marginals, we can always construct OT plans $\pi^{(k)}$, $k \in \idn{K}$. To this end, denote the cumulative sums of $\mu^{(k)}$ with
\begin{equation} \label{eq:cusum_monge}
	A^{(k)}_i \coloneqq \sum_{r=1}^{i} \mu^{(k)}_r\,, \qquad i \in \{ 0 \} \cup \idn{N}\,,
\end{equation}
and define $\{ B^{(k)}_j \}_{j=0}^M$ similarly as the cumulative sums of $\nu^{(k)}$. Then, an OT plan $\pi^{(k)}$ between $\mu^{(k)}$ and $\nu^{(k)}$ w.r.t.\ a Monge matrix $c$ is given by the monotone plan
\begin{equation} \label{eq:monotone_plan}
	\pi^{(k)}_{i,j} = \lambda_1\{ ( A_{i-1}^{(k)}, A_i^{(k)}] \cap (B_{j-1}^{(k)}, B_j^{(k)}] \} \,, \qquad i \in \idn{N} ,\, j \in \idn{M}\,,
\end{equation}
where $\lambda_1$ is the one-dimensional Lebesgue measure. As this does not directly depend on $c$, only on $c$ being Monge, we again see that knowledge of the total cost \ref{enum:ot_cost} is crucial for identifiability.

\subsection{Identifiability from Total Costs Alone}\label{Sec:TotalCost}

First, we consider the most difficult case of only observing the total costs \ref{enum:ot_cost},
\begin{equation} \label{eq:costs_known}
	\alpha^{(k)} = \OT_c(\mu^{(k)}, \nu^{(k)}) \,, \qquad k \in \idn{K} \,.
\end{equation}
Here, we only know that for each $k \in \idn{K}$ there exists at least one extreme point $u^{(k)} \in \EP(\mu^{(k)}, \nu^{(k)})$ such that $\inner{c}{u^{(k)}} = \alpha^{(k)}$ and any other $v^{(k)} \in \EP(\mu^{(k)}, \nu^{(k)})$ must satisfy $\inner{c}{v^{(k)}} \geq \alpha^{(k)}$. This leads to a multitude of LPs whose solutions, if existent, all give a valid cost matrix.

\begin{theorem} \label{thm:ident_only_total}
	Suppose that we observe $K$ distinct pairs of marginals and the corresponding total costs \ref{enum:ot_cost}. Then, $c$ is identifiable in $\R^{N \times M}$ if and only if for all possible combinations of $u^{(k)} \in \EP(\mu^{(k)}, \nu^{(k)})$, $k \in \idn{K}$, the system
	\begin{equation} \label{eq:system_only_total}
		\begin{aligned}
			\inner{c}{v^{(k)}} &\geq \alpha^{(k)}, &&& v^{(k)} &\in \EP(\mu^{(k)}, \nu^{(k)}) \setminus \{ u^{(k)} \},  \\
			\inner{c}{u^{(k)}} &= \alpha^{(k)}, &&& k &\in \idn{K},
		\end{aligned}
	\end{equation}
	is either inconsistent or the solution is unique and independent of the choice. Note that if a solution exists (that is allowed to depend on the choice), then we have existence of the underlying cost matrix $c$.
\end{theorem}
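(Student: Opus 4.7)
The plan is to characterize the set of cost matrices consistent with the observed totals \eqref{eq:costs_known} and then rephrase identifiability as a set-theoretic condition on that characterization.

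I would first establish the following equivalence: a matrix $c' \in \R^{N \times M}$ satisfies $\alpha^{(k)} = \OT_{c'}(\mu^{(k)}, \nu^{(k)})$ for all $k \in \idn{K}$ if and only if there exists a tuple $(u^{(1)}, \dots, u^{(K)})$ with $u^{(k)} \in \EP(\mu^{(k)}, \nu^{(k)})$ such that $c'$ satisfies the system \eqref{eq:system_only_total} for this tuple. The forward direction uses the fundamental theorem of linear programming: the minimum of a linear functional over the bounded polytope $\Pi(\mu^{(k)}, \nu^{(k)})$ is attained at some extreme point, so taking any such minimizer of $\pi \mapsto \inner{c'}{\pi}$ as $u^{(k)}$ yields $\inner{c'}{u^{(k)}} = \alpha^{(k)}$, and the optimality of $\alpha^{(k)}$ forces $\inner{c'}{v^{(k)}} \geq \alpha^{(k)}$ for every other extreme point $v^{(k)}$. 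For the reverse direction, using $\min_{\pi \in \Pi(\mu, \nu)} \inner{c'}{\pi} = \min_{v \in \EP(\mu, \nu)} \inner{c'}{v}$ together with the inequalities and equality of the system immediately gives $\OT_{c'}(\mu^{(k)}, \nu^{(k)}) = \alpha^{(k)}$ for every $k$.

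Writing $S(u^{(1)}, \dots, u^{(K)})$ for the (possibly empty) solution set of the system \eqref{eq:system_only_total} associated to a fixed tuple of extreme points, the previous equivalence shows that the set $\calC$ of cost matrices consistent with the observed totals equals the union $\bigcup S(u^{(1)}, \dots, u^{(K)})$ over all such tuples. The closing note of the theorem follows at once: any solution of any single system is a valid consistent cost matrix, hence existence. Identifiability means precisely that $\calC$ is a singleton, and since the assumption that the underlying $c$ exists makes $\calC$ non-empty, this is equivalent to every non-empty $S(u^{(1)}, \dots, u^{(K)})$ consisting of the same single point. The latter is exactly the statement that, for every tuple, the system is either inconsistent or has a unique solution independent of the choice of tuple, yielding both directions of the theorem.

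The main work lies in the first step, specifically the forward direction, where one must exhibit the witness extreme point $u^{(k)}$ attaining the observed minimum value; this is immediate from the fundamental theorem of linear programming applied to $\OT_{c'}$. Beyond that, I do not anticipate a deeper obstacle: once the characterization of $\calC$ via tuples of extreme points is in place, the theorem reduces to a straightforward set-theoretic translation of the condition that the union of the solution sets is a singleton.
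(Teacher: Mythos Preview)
Your proposal is correct and follows essentially the same approach as the paper: the paper's proof is the single sentence ``A cost matrix $c$ is consistent with the observed OT information if and only if it satisfies the above system \eqref{eq:system_only_total} for at least one combination,'' and everything you wrote simply unpacks that equivalence and the ensuing set-theoretic translation of identifiability. Your version is more explicit about why the equivalence holds (fundamental theorem of LP for the forward witness, $\min$ over extreme points for the reverse), but the underlying idea is identical.
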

\begin{proof}
	A cost matrix $c$ is consistent with the observed OT information if and only if it satisfies the above system \eqref{eq:system_only_total} for at least one combination.
\end{proof}

\begin{remark}[Additional linear constraints on $c$]
	Each system \eqref{eq:system_only_total} in \autoref{thm:ident_only_total} is an LP. This formulation is quite flexible as it allows us to place additional linear (in-)equality constraints on the cost matrix $c$. However, the LP might not be proper in the sense that the feasibility set can be unbounded. In general, this can be dealt with by placing a boundedness restriction on the set of cost matrices $c$, e.g., $0 \leq c \leq C_0$ for some constant $C_0 > 0$.
\end{remark}

\begin{remark}[Set of consistent cost matrices]
    As each system \eqref{eq:system_only_total} in \autoref{thm:ident_only_total} is an LP, it follows that the set of consistent cost matrices is equal to a union of at most $K$ (possibly unbounded) polytopes.
\end{remark}

\begin{remark}[Reformulation as Mixed Integer Linear Program]
	Note that the formulation involving all possible combinations of optimal extreme points in \autoref{thm:ident_only_pot} can be reformulated as follows: The cost matrix $c$ is identifiable in $\R^{N \times M}$ if and only if the system
	\begin{equation*}
		\inner{c}{v^{(k)}} \geq \alpha^{(k)} \,, \qquad v^{(k)} \in \EP(\mu^{(k)}, \nu^{(k)}), \qquad k \in \idn{K},
	\end{equation*}
	where equality is attained for at least one $v^{(k)}$ for each $k \in \idn{K}$, has a unique solution.

	Assuming that $c$ is bounded by $C_0 > 0$, this can be formulated as a mixed integer linear program (MILP), i.e., an LP where some variables are constrained to be integer. To this end, enumerate for all $k \in \idn{K}$ the extreme points $\EP(\mu^{(k)}, \nu^{(k)})$ as $\{ v^{(1, k)}, \ldots, v^{(R_k, k)} \}$ and introduce for $j \in \idn{R_k}$, $k \in \idn{K}$ the binary variables $e^{(j, k)} \in \{0, 1\}$ and the slack variables $z^{(j, k)} \in \R$. Then, the MILP reads
	\begin{align*}
		\sum_{r=1}^{R_k} e^{(r, k)} &\geq 1, &&& 0 \leq z^{(j, k)} &\leq [C_0 - \alpha^{(k)}] (1 - e^{(j, k)}), \\
 		\inner{c}{v^{(j, k)}} - z^{(j, k)} &= \alpha^{(k)}, &&& j &\in \idn{R_k},\; k \in \idn{K}.
	\end{align*}
	Note that the free variables are $c$ and $e^{(j, k)},\, v^{(j, k)}$ with $j \in \idn{R_k}$, $k \in \idn{K}$. In particular, $c$ is identifiable in $[0, C_0]^{N \times M}$ if and only if the MILP has a unique solution in $c$ (not the full set of free variables). Since solving MILPs is NP-hard, in general, this suggests that inferring the cost matrix from marginals and total costs only is a difficult and computationally expensive problem. The situation simplifies significantly if the OT plans are observed, see \autoref{Sec:Estimation}. Furthermore, note that in the current formulation, any $e^{(k,r)} = 1$ can be changed to $0$ if $\sum_{j=1}^{R_k} e^{(j, k)} \geq 2$, without changing the solution in $c$. Hence, uniqueness of the full solution of the MILP is stronger than identifiability of $c$. In fact, the MILP can only have a unique solution when the underlying optimal faces each consist of a single point.
\end{remark}

\begin{remark}[Computational complexity] \label{rem:comp_complexity_total}
	Denote $V_k = \abs{\EP(\mu^{(k)}, \nu^{(k)})}$ with $V \defeq \prod_{k=1}^K V_k$ and $W \defeq \sum_{k=1}^K V_k$. Then, to check identifiability of $c$ via \autoref{thm:ident_only_total} we have to determine the unique solvability of $V$ LPs, each with variable size $NM$ and $W$ constraints. Whether a given LP has a unique solution can be checked by computing an adjusted version of the LP, see \autoref{rem:comp_complexity_total_plan} for more details. As the $V_k$ can scale exponentially in $N$ and $M$ (see also \autoref{rem:comp_ri}) the computational burden of \autoref{thm:ident_only_total} can be enormous and practically feasible only for small values of $N$ and $M$.
\end{remark}

\begin{remark}[Simpler sufficient condition] \label{rem:ident_only_total_eq}
	By dropping the inequalities in the system \eqref{eq:system_only_total} given in \autoref{thm:ident_only_total}, we get a sufficient condition for identifiability that is easier to check as it only involves linear equations. Namely, $c$ is identifiable in $\R^{N \times M}$ if for all possible combinations of potentially optimal $u^{(k)} \in \EP(\mu^{(k)}, \nu^{(k)})$, $k \in \idn{K}$, the system
	\begin{equation} \label{eq:system_only_total_eq}
		\inner{c}{u^{(k)}} = \alpha^{(k)},\qquad k \in \idn{K},
	\end{equation}
	is either inconsistent or the solution is unique and independent of the choice. Note that here it is crucial that we assume that the underlying cost matrix $c$ exists, as the above system (in contrast to the one in \autoref{thm:ident_only_total}) does not ensure consistency, see also \autoref{ex:incons_c_lin_eq}. Finally, this simpler condition decreases the computational complexity (in comparison to \autoref{rem:comp_complexity_total}) to $V$ Gaussian eliminations in $\bigO( [NM]^2 K)$ time.
\end{remark}

We now give an example where \autoref{thm:ident_only_total} provides identifiability but not the conditions given in \autoref{rem:ident_only_total_eq}.

\begin{example}
	Suppose that for $N = M = 2$ we observe the following OT information consisting of $K = 4$ pairs of marginals and corresponding total costs
    \begin{align*}
		\mu^{(1)} &=(3/4, 1/4),&\mu^{(2)} &=(3/7, 4/7),&\mu^{(3)} &=(4/5, 1/5),&\mu^{(4)} &=(3/7, 4/7), \\
		\nu^{(1)} &=(5/8, 3/8),&\nu^{(2)} &=(1/5, 4/5),&\nu^{(3)} &=(1/2, 1/2),&\nu^{(4)} &=(1/2, 1/2), \\
		\alpha^{(1)} &=7/4,&\alpha^{(2)} &=1,&\alpha^{(3)} &=1,&\alpha^{(4)} &=1.
	\end{align*}
	Then, it can be checked that \autoref{thm:ident_only_total} yields identifiability of the underlying cost matrix $c$ in $\R^{N \times M}$, and it is given by
	\begin{equation*}
		c = \begin{bmatrix} 9/2 & -2 \\ 13/4 & 13/4 \end{bmatrix}.
	\end{equation*}
	To be more precise, only the following combination of extreme points yields a solution to the system \eqref{eq:system_only_total}:
    \begin{align*}
        u^{(1)} &= \begin{bmatrix} 3/8 & 3/8 \\ 1/4 & 0 \end{bmatrix}, &  u^{(2)} &= \begin{bmatrix} 0 & 3/7 \\ 1/5 & 13/35 \end{bmatrix}, \\
        u^{(3)} &= \begin{bmatrix} 3/10 & 1/2 \\ 1/5 & 0 \end{bmatrix}, &
        u^{(4)} &= \begin{bmatrix} 0 & 3/7 \\ 1/2 & 1/14 \end{bmatrix}.
    \end{align*}
    All other combinations of extreme points lead to an inconsistent system \eqref{eq:system_only_total}. Notably, when dropping the inequality constraints, each corresponding system \eqref{eq:system_only_total_eq} has a unique solution that is not independent of the combination. Hence, in this example \autoref{rem:ident_only_total_eq} cannot be used to obtain identifiability.
\end{example}

If we know that the cost matrix satisfies the Monge property \eqref{eq:monge_prop}, we already know one extreme point that is optimal. Hence, in this case identifiability reduces to the unique solvability of a single LP.

\begin{corollary} \label{cor:ident_only_total_monge}
	Suppose that we observe $K$ distinct pairs of marginals and the corresponding total costs \ref{enum:ot_cost}. For each $k \in \idn{K}$ denote with $\pi^{(k)} \in \EP(\mu^{(k)}, \nu^{(k)})$ the monotone transport plan \eqref{eq:monotone_plan}. Then, $c$ is identifiable in the class of Monge matrices if and only if the system \eqref{eq:system_only_total} with $u^{(k)} = \pi^{(k)}$, $k \in \idn{K}$, has a unique solution. Note that if a solution exists that satisfies the Monge property, then we have existence of the underlying cost matrix $c$.
\end{corollary}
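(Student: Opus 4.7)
The plan is to reduce the corollary to \autoref{thm:ident_only_total} by exploiting the fact that the Monge property pins down a known optimal extreme point, namely the monotone plan \eqref{eq:monotone_plan}. The enabling ingredient is Hoffman's theorem quoted in the excerpt: whenever $c'$ is Monge, the north-west-corner plan $\pi^{(k)}$ is an OT plan between $\mu^{(k)}$ and $\nu^{(k)}$. In particular, every Monge matrix $c'$ that is consistent with the observed total costs satisfies the system \eqref{eq:system_only_total} with the single choice $u^{(k)} = \pi^{(k)}$ for all $k \in \idn{K}$, i.e., $\inner{c'}{\pi^{(k)}} = \alpha^{(k)}$ together with $\inner{c'}{v^{(k)}} \geq \alpha^{(k)}$ for every other extreme point $v^{(k)} \in \EP(\mu^{(k)}, \nu^{(k)})$.

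For sufficiency, I would take two Monge cost matrices $c_1, c_2$ consistent with the data. By the previous observation both solve the fixed LP \eqref{eq:system_only_total} with $u^{(k)} = \pi^{(k)}$, and uniqueness of the solution forces $c_1 = c_2$, yielding identifiability in the Monge class. For necessity, assume $c$ is identifiable among Monge matrices; by Hoffman's theorem $c$ itself solves this LP. If $c'$ is any other Monge solution, the constraints directly give $\OT_{c'}(\mu^{(k)}, \nu^{(k)}) = \alpha^{(k)}$, because the minimum of the primal OT problem is attained at an extreme point of $\Pi(\mu^{(k)}, \nu^{(k)})$, and $\pi^{(k)}$ achieves equality. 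Hence $c'$ is consistent with the observed total costs, and identifiability within the Monge class yields $c' = c$. The existence comment at the end of the statement follows from exactly this computation: any Monge solution of the LP reproduces the observed total costs and is therefore a valid underlying cost matrix.

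The only minor subtlety is the reading of "unique solution": it should be interpreted as uniqueness within the class of Monge matrices, which remains an LP since the Monge inequalities \eqref{eq:monge_prop} are linear in $c$ and can be appended to \eqref{eq:system_only_total}. The proof is then essentially a one-line application of \autoref{thm:ident_only_total}, with Hoffman's theorem collapsing the combinatorial search over tuples of extreme points $(u^{(1)},\dots,u^{(K)})$ to the single distinguished tuple $(\pi^{(1)},\dots,\pi^{(K)})$. I do not anticipate a genuine obstacle here; the main conceptual point is simply to notice that the Monge property is exactly the structural assumption that identifies the relevant vertex of each transport polytope a priori, which is precisely what makes \autoref{thm:ident_only_total} tractable in this regime.
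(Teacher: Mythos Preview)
Your proposal is correct and is precisely the argument the paper has in mind: the corollary is stated without proof because it is an immediate specialization of \autoref{thm:ident_only_total} once Hoffman's theorem fixes the optimal extreme point $u^{(k)}=\pi^{(k)}$ for every Monge candidate. Your observation about the reading of ``unique solution'' is well taken and matches the paper's own remark immediately following the corollary, where it notes that the Monge inequalities \eqref{eq:monge_prop} are linear and can be appended to the system \eqref{eq:system_only_total} without leaving the LP framework.
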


Note that the Monge property \eqref{eq:monge_prop} is a linear inequality constraint on the cost matrix $c$. Hence, the Monge matrix constraint can be explicitly added to the system \eqref{eq:system_only_total} of \autoref{cor:ident_only_total_monge} without changing the LP structure. The computational complexity to determine in \autoref{cor:ident_only_total_monge} whether the system \eqref{eq:system_only_total} has a unique solution is similar to the one in the case that we observe the total costs as well as the OT plans, which we discuss in more detail in \autoref{rem:comp_complexity_total_plan}.

\subsection{Identifiability from Optimal Potentials Alone}\label{Sec:OTpotent}

Next, we consider the case where we only know the  optimal potentials \ref{enum:ot_pot}. Note that, in such a scenario, the total costs \ref{enum:ot_cost}  are determined by the relations
\begin{equation} \label{eq:pot_known}
	\alpha^{(k)} = \OT_c(\mu^{(k)}, \nu^{(k)}) = \inner{f^{(k)}}{\mu^{(k)}} + \inner{g^{(k)}}{\nu^{(k)}}\,, \qquad k \in \idn{K} \,.
\end{equation}

We proceed as in \autoref{thm:ident_only_total} and consider all combinations of possible optimal extreme points. Using the primal-dual optimality criterion (\autoref{lemma:primal_dual_opt}), we get a more restrictive system.

\begin{theorem} \label{thm:ident_only_pot}
	Suppose that we observe $K$ distinct pairs of marginals and the corresponding total costs \ref{enum:ot_cost} and optimal potentials \ref{enum:ot_pot}. Then, $c$ is identifiable in $\R^{N \times M}$ if and only if for all possible combinations of $u^{(k)} \in \EP(\mu^{(k)}, \nu^{(k)})$, $k \in \idn{K}$, the system
	\begin{equation} \label{eq:system_only_pot}
		\begin{aligned}
			\inner{c}{v^{(k)}} &\geq \alpha^{(k)}, &&& \inner{c}{u^{(k)}} &= \alpha^{(k)}, \\
			c &\geq f^{(k)} \oplus g^{(k)} \text{ with equality on } \supp u^{(k)}, &&& k &\in \idn{K},
			\\ v^{(k)} &\in \EP(\mu^{(k)}, \nu^{(k)}) \setminus \{ u^{(k)} \},
		\end{aligned}
	\end{equation}
	is either inconsistent or the solution is unique and independent of the choice. Note that if a solution exists (that is allowed to depend on the choice), then we have existence of the underlying cost matrix $c$.
\end{theorem}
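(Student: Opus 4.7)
The plan is to adapt the argument of \autoref{thm:ident_only_total} by incorporating the primal--dual optimality criterion (\autoref{lemma:primal_dual_opt}), which translates the observation of optimal potentials into additional linear constraints on $c$. The overall strategy is to show that the set of cost matrices consistent with the observed OT information is exactly the union, over all combinations $(u^{(k)})_{k \in \idn{K}}$ of extreme points, of the solution sets of the LPs \eqref{eq:system_only_pot}, and then to read off the identifiability condition as uniqueness of a single element in this union.

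First I would establish the following equivalence: a cost matrix $c$ is consistent with the observed data \ref{enum:ot_cost} and \ref{enum:ot_pot} if and only if there exists a choice of $u^{(k)} \in \EP(\mu^{(k)}, \nu^{(k)})$, $k \in \idn{K}$, such that $c$ satisfies \eqref{eq:system_only_pot}. For the ``only if'' direction, suppose $c$ is consistent. Since the LP defining $\OT_c(\mu^{(k)}, \nu^{(k)})$ attains its minimum at an extreme point of the bounded polytope $\Pi(\mu^{(k)}, \nu^{(k)})$, we may pick $u^{(k)} \in \EP(\mu^{(k)}, \nu^{(k)}) \cap \OPT_c(\mu^{(k)}, \nu^{(k)})$, which immediately gives $\inner{c}{u^{(k)}} = \alpha^{(k)}$ and $\inner{c}{v^{(k)}} \geq \alpha^{(k)}$ for every other $v^{(k)} \in \EP(\mu^{(k)}, \nu^{(k)})$. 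The constraint $(f^{(k)}, g^{(k)}) \in \Phi_c$ yields $c \geq f^{(k)} \oplus g^{(k)}$, and applying \autoref{lemma:primal_dual_opt} to the optimal primal--dual pair $\bigl(u^{(k)}, (f^{(k)}, g^{(k)})\bigr)$ gives equality on $\supp u^{(k)}$. Conversely, given any combination satisfying \eqref{eq:system_only_pot}, the first line forces $u^{(k)}$ to achieve the minimum over $\EP(\mu^{(k)}, \nu^{(k)})$, hence over $\Pi(\mu^{(k)}, \nu^{(k)})$, so $\alpha^{(k)} = \OT_c(\mu^{(k)}, \nu^{(k)})$; dual feasibility $c \geq f^{(k)} \oplus g^{(k)}$ together with equality on $\supp u^{(k)}$ and \autoref{lemma:primal_dual_opt} shows that $(f^{(k)}, g^{(k)})$ is an optimal potential pair, and strong duality forces $\inner{f^{(k)}}{\mu^{(k)}} + \inner{g^{(k)}}{\nu^{(k)}} = \alpha^{(k)}$.

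With this equivalence in hand, identifiability of $c$ amounts to the union of the finitely many solution sets (over all combinations) being a singleton. Since each solution set is the feasible region of an LP, this holds if and only if, for every combination, the LP \eqref{eq:system_only_pot} is either inconsistent or has a unique solution, and all non-empty solution sets coincide. This is precisely the stated condition. The addendum about existence is immediate: if any one combination admits a solution, then that solution is a cost matrix consistent with the observed OT information.

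The main obstacle is getting the equivalence in the first step exactly right, and in particular making sure that the extra constraints imposed by the observation of potentials --- namely $c \geq f^{(k)} \oplus g^{(k)}$ with equality on $\supp u^{(k)}$ --- fully capture the assertion that $(f^{(k)}, g^{(k)})$ is an \emph{optimal} pair, not merely that it is dual-feasible. This is precisely what \autoref{lemma:primal_dual_opt} provides, but one has to be careful to argue in both directions: the observed $u^{(k)}$ in \ref{pr:ri} need not be an extreme point, yet the characterization must quantify over extreme points, so in the forward direction we invoke existence of an extreme optimum and in the backward direction we use that the extreme-point minimum coincides with the minimum over the whole polytope.
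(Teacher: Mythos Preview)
Your proposal is correct and follows precisely the approach indicated in the paper, which does not give an explicit proof of \autoref{thm:ident_only_pot} but merely remarks, just before its statement, that one proceeds as in \autoref{thm:ident_only_total} and uses the primal--dual optimality criterion (\autoref{lemma:primal_dual_opt}) to obtain the additional constraints. You have simply spelled out both directions of the consistency equivalence that the paper leaves implicit.
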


Note that the computational complexity is similar to the condition to be checked in \autoref{thm:ident_only_total}, recall \autoref{rem:comp_complexity_total}. Additionally, we point out that the condition $c = f^{(k)} \oplus g^{(k)}$ on $\supp u^{(k)}$ directly decreases the number of free variables.

For the class of Monge matrices, we can again use the structural knowledge of the OT to arrive at the following more specialized version of \autoref{thm:ident_only_pot}.

\begin{corollary} \label{cor:ident_only_pot_monge}
	Suppose that we observe $K$ pairs of marginals and the corresponding total costs \ref{enum:ot_cost} and optimal potentials \ref{enum:ot_pot}. For each $k \in \idn{K}$ denote with $\pi^{(k)} \in \EP(\mu^{(k)}, \nu^{(k)})$ the monotone transport plan \eqref{eq:monotone_plan}. Then, the cost matrix $c$ is identifiable in the class of Monge matrices if and only if the system \eqref{eq:system_only_pot} with $u^{(k)} = \pi^{(k)}$, $k \in \idn{K}$, has a unique solution. Note that if a solution exists that satisfies the Monge property, then we have existence of the underlying cost matrix.
\end{corollary}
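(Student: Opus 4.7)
The plan is to reduce the statement to \autoref{thm:ident_only_pot} by exploiting the classical fact, cited in the discussion around \eqref{eq:monotone_plan}, that for any Monge cost $c$ the monotone (north-west-corner) plan $\pi^{(k)}$ is an optimal transport plan between $\mu^{(k)}$ and $\nu^{(k)}$, and moreover an extreme point of $\Pi(\mu^{(k)}, \nu^{(k)})$. This single observation collapses the combinatorial enumeration over all tuples $(u^{(k)})_{k \in \idn{K}}$ appearing in \autoref{thm:ident_only_pot} to the one distinguished tuple $u^{(k)} = \pi^{(k)}$, which is what the corollary asserts. As in the discussion following \autoref{cor:ident_only_total_monge}, I interpret ``unique solution'' as uniqueness inside the Monge class, the Monge property \eqref{eq:monge_prop} being an explicit linear inequality constraint added to the system \eqref{eq:system_only_pot}.

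For the ``$\Leftarrow$'' direction, suppose the system \eqref{eq:system_only_pot} with $u^{(k)} = \pi^{(k)}$ has a unique Monge solution. Let $c, c' \in \R^{N \times M}$ be any two Monge cost matrices consistent with \hyperlink{eq:obs}{(OBS)}. By the Hoffman result, $\pi^{(k)}$ is optimal for both $c$ and $c'$ between $\mu^{(k)}$ and $\nu^{(k)}$. Together with the observed optimal potentials $(f^{(k)}, g^{(k)}) \in \Phi_c \cap \Phi_{c'}$, the primal-dual optimality criterion \autoref{lemma:primal_dual_opt} yields $c = f^{(k)} \oplus g^{(k)}$ and $c' = f^{(k)} \oplus g^{(k)}$ on $\supp \pi^{(k)}$, while $c, c' \geq f^{(k)} \oplus g^{(k)}$ everywhere and $\inner{c}{\pi^{(k)}} = \inner{c'}{\pi^{(k)}} = \alpha^{(k)}$. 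Hence both $c$ and $c'$ satisfy the specific system, and uniqueness forces $c = c'$.

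For the ``$\Rightarrow$'' direction, assume the cost matrix $c$ is identifiable in the class of Monge matrices. As the underlying Monge $c$ exists, the same argument as above (invoking Hoffman's result and \autoref{lemma:primal_dual_opt}) shows that $c$ itself solves \eqref{eq:system_only_pot} with $u^{(k)} = \pi^{(k)}$, so the system is consistent. To show uniqueness, let $c'$ be any Monge solution. The inequality $c' \geq f^{(k)} \oplus g^{(k)}$ gives $(f^{(k)}, g^{(k)}) \in \Phi_{c'}$, and equality on $\supp \pi^{(k)}$ combined with \autoref{lemma:primal_dual_opt} shows that $\pi^{(k)}$ and $(f^{(k)}, g^{(k)})$ are primal/dual optimal for $c'$, with common value $\inner{c'}{\pi^{(k)}} = \alpha^{(k)}$. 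Therefore $c'$ is consistent with \hyperlink{eq:obs}{(OBS)}, and identifiability in the Monge class forces $c' = c$.

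The statement is essentially a specialization of \autoref{thm:ident_only_pot}, so I do not expect a substantial technical obstacle. The main conceptual step, and the only point requiring some care, is to justify that the Monge assumption truly trivializes the enumeration over extreme points: one must ensure both that the choice $u^{(k)} = \pi^{(k)}$ is always valid (this is precisely Hoffman's theorem) and that alternative choices do not produce extra Monge solutions missed by the distinguished system — which, in the proof above, is absorbed into verifying that any Monge $c'$ consistent with the data must itself solve the distinguished system.
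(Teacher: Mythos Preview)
Your proposal is correct and follows the same line as the paper, which gives no explicit proof for this corollary and treats it as an immediate specialization of \autoref{thm:ident_only_pot} via Hoffman's result that the monotone plan is optimal for Monge costs. Your write-up simply spells out in detail the reduction the paper leaves implicit, including the use of \autoref{lemma:primal_dual_opt} to verify that any Monge $c'$ consistent with the data solves the distinguished system and vice versa.
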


Again, checking the above condition corresponds to determining whether an LP has a unique solution (see \autoref{rem:comp_complexity_total_plan}). Furthermore, only using the primal-dual optimality criterion (\autoref{lemma:primal_dual_opt}), we obtain the following easily verifiable sufficient condition for the identifiability of Monge matrices. Recall that this encompasses all the matrices given in \autoref{ex:monge_mat}.

\begin{corollary} \label{cor:ident_only_pot_monge_primaldual}
	Suppose that we observe $K$ distinct pairs of marginals and the corresponding total costs \ref{enum:ot_cost} and optimal potentials \ref{enum:ot_pot}. Recall the definition of $A^{(k)}_i$ and $B^{(k)}_j$ in \eqref{eq:cusum_monge}. Then, $c$ is identifiable in the class of Monge matrices if for all $i \in \idn{N}$, $j \in \idn{M}$ there exists a $k \in \idn{K}$ such that
	\begin{equation*}
		\max(A_{i-1}^{(k)}, B_{j-1}^{(k)}) < \min(A_i^{(k)}, B_j^{(k)}) \,.
	\end{equation*}
\end{corollary}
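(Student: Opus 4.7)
The plan is to observe that the given geometric condition on the cumulative sums is precisely a statement about the support of the monotone transport plan, and then to invoke the primal-dual optimality criterion to conclude that each entry of $c$ is read off from the potentials.

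First, I would note that by the definition \eqref{eq:monotone_plan} of the monotone plan, $\pi^{(k)}_{i,j}$ equals the Lebesgue measure of $(A_{i-1}^{(k)}, A_i^{(k)}] \cap (B_{j-1}^{(k)}, B_j^{(k)}]$, and this intersection has positive length exactly when $\max(A_{i-1}^{(k)}, B_{j-1}^{(k)}) < \min(A_i^{(k)}, B_j^{(k)})$. Hence the hypothesis says that for every pair $(i,j) \in \idn{N} \times \idn{M}$ there exists $k \in \idn{K}$ with $(i,j) \in \supp \pi^{(k)}$.

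Since $c$ is assumed Monge and the north-west-corner rule produces an OT plan for Monge costs \cite{Hoffman1963}, each $\pi^{(k)}$ is an OT plan between $\mu^{(k)}$ and $\nu^{(k)}$ for $c$. The observed potentials $(f^{(k)}, g^{(k)}) \in \Phi_c$ are optimal by assumption \ref{enum:ot_pot} (and the total cost equality \eqref{eq:pot_known}). Applying the primal-dual optimality criterion (\autoref{lemma:primal_dual_opt}) to the pair $(\pi^{(k)}, (f^{(k)}, g^{(k)}))$ yields
\begin{equation*}
    c_{i,j} = f_i^{(k)} + g_j^{(k)} \quad \text{for every } (i,j) \in \supp \pi^{(k)}.
\end{equation*}
Combining this with the covering property of the supports obtained in the first step, for each $(i,j)$ we can pick a suitable $k$ and recover $c_{i,j} = f_i^{(k)} + g_j^{(k)}$, so $c$ is uniquely determined by the observations.

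There is no real obstacle here beyond correctly translating the cumulative-sum inequality into the statement $\pi^{(k)}_{i,j} > 0$; the rest is a direct consequence of \autoref{lemma:primal_dual_opt} and the Monge structure ensuring optimality of $\pi^{(k)}$. The only small subtlety is that identifiability is claimed within the class of Monge matrices (not $\R^{N \times M}$), which is exactly what justifies using the monotone plan as a known OT plan without appealing to additional observations.
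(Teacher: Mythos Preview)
Your proposal is correct and follows exactly the route the paper indicates: the text preceding the corollary states it is obtained ``only using the primal-dual optimality criterion (\autoref{lemma:primal_dual_opt})'', and your argument does precisely this---translate the cumulative-sum inequality into $(i,j)\in\supp\pi^{(k)}$ via \eqref{eq:monotone_plan}, use the Monge property to guarantee optimality of the monotone plan, and read off $c_{i,j}=f_i^{(k)}+g_j^{(k)}$ from \autoref{lemma:primal_dual_opt}. This is also essentially the sufficiency direction of \autoref{thm:ident_plan_pot} specialized to the Monge setting, where the OT plans need not be observed because they are known a priori.
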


This identifiability criterion essentially means computing the monotone plans \eqref{eq:monotone_plan} which can be done in $\bigO(K(N + M))$ elementary operations (see \cite{Hoffman1963}). Next, we give an example where \autoref{cor:ident_only_pot_monge}, but not \autoref{cor:ident_only_pot_monge_primaldual}, yields identifiability.

\begin{example}
	Suppose that for $N = M = 2$ we observe the following OT information consisting of $K = 3$ pairs of marginals and corresponding total costs and optimal potentials
	\begin{align*}
		\mu^{(1)} &= (1/2, 1/2),&&&\mu^{(2)} &= (1/3, 2/3),&&&\mu^{(3)} &= (1, 0),\\
		\nu^{(1)} &= (1, 0),&&&\nu^{(2)} &= (1, 0),&&&\nu^{(3)} &= (2/5, 3/5),\\
		\alpha^{(1)} &= 1,&&&\alpha^{(2)} &= 2/3,&&&\alpha^{(3)} &= 5/3,\\
		f^{(1)} &= (0, -2),&&&f^{(2)} &= (0, -2),&&&f^{(3)} &= (2, 0),\\
		g^{(1)} &= (2, 0),&&&g^{(2)} &= (2, 0),&&&g^{(3)} &= (0, -5/9).
	\end{align*}
	Then, \autoref{cor:ident_only_pot_monge} yields that the following underlying cost matrix
	\begin{equation*}
		c = \begin{bmatrix} 2 & 13/9 \\ 0 & -5/9 \end{bmatrix}
	\end{equation*}
	is identifiable in the class of Monge matrices. However, the given OT information is not enough to obtain identifiability via \autoref{cor:ident_only_pot_monge_primaldual}.
\end{example}

\subsection{Identifiability from Optimal Transport Plans Alone}\label{Sec:Plans}

This subsection is devoted to the identifiability of the cost matrix when only the OT plans \ref{enum:ot_plan} are observed. This is quite different from the other settings as due to the shift invariance \eqref{eq:shift_equiv} we cannot achieve true identifiability. Nevertheless, we will be able to identify the underlying cost matrix to some extent.

\begin{figure}
    \centering
\includegraphics[width=0.9\textwidth]{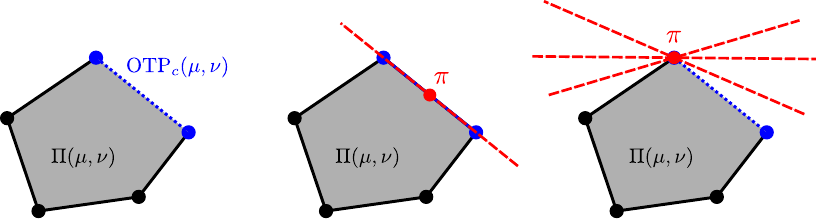}
    \caption{The polytope $\Pi(\mu, \nu)$ (gray-filled) with optimality set $\OPT_c(\mu, \nu)$ (dotted line): The observation of a point in the relative interior of the optimality set provides more information than the knowledge of an extreme point. Middle figure: Only one possible tangent plane (dashed line) at the point $\pi$. Right figure: A whole bundle of hyperplanes (dashed lines). Therefore, in the figure in the middle, we have more information to identify the cost matrix.}
    \label{fig:visualExplanation}
\end{figure}

Observing the optimal plans \ref{enum:ot_plan}, we know by \autoref{pr:ri} that the smallest face that $\pi^{(k)}$ lies on must be contained in $\OPT_c(\mu^{(k)}, \nu^{(k)})$. This yields further information about the tangent hyperplane $\pi^{(k)} +\{ x \in \R^{N \times M} : \inner{x}{c} = 0 \}$, see \autoref{fig:visualExplanation}. Depending on the size of $\spanop \{ \pi^{(k)} - u^{(\ell, k)} : k \in \idn{K},\, \ell \in \idn{L_k}\}$, we will see that the cost matrix may be identifiable in a suitable sense. Recall that $\shiftc^{N \times M}$ denotes the quotient space of cost matrices up to shift equivalence \eqref{eq:shift_equiv}. In the following theorem, we show that we can identify $c$ in $\shiftc^{N \times M}$ up to a span of a number of linearly independent vectors.

\begin{theorem} \label{thm:ident_only_plans}
    Suppose that we observe $K$ distinct pairs of marginals and the corresponding OT plans \ref{enum:ot_plan}. Let $\{ u^{(\ell, k)} : \ell \in \idn{L_k} \}$, $k \in \idn{K}$, be as in \autoref{pr:ri}. Then, the following holds:
    \begin{enumerate}
        \item Let $S \in \N_0$ be such that
        \begin{equation*}
 	          \dim ( \spanop \{ \pi^{(k)} - u^{(\ell,k)} : \ell \in \idn{L_k},\,k \in \idn{K} \} ) = (N-1)(M-1) - S,
        \end{equation*}
        then, the cost matrix $c$ is identifiable in $\shiftc^{N \times M}$ up to the span of $S$ linearly independent vectors.
        \item If $\coh \{u^{(\ell, k)} : \ell \in \idn{L_k} \} =\OPT_c(\pi^{(k)})$ for all $k \in \idn{K}$, then the converse holds.
    \end{enumerate}
\end{theorem}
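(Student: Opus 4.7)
The plan is to translate \autoref{pr:ri} into a linear-algebraic identifiability statement. Let $W \defeq \{a \oplus b : a \in \R^N,\, b \in \R^M\}$ denote the $(N+M-1)$-dimensional shift subspace, and set $V \defeq \spanop\{\pi^{(k)} - u^{(\ell,k)} : \ell \in \idn{L_k},\, k \in \idn{K}\}$. Since $\inner{a \oplus b}{\pi} = \inner{a}{\mu} + \inner{b}{\nu}$ for every $\pi \in \Pi(\mu, \nu)$, one has $\inner{a \oplus b}{\pi - u} = 0$ whenever $\pi, u$ share the same marginals; hence $W \subseteq V^\perp$ inside $\R^{N \times M}$.

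For the forward direction (part 1), I would fix any cost matrix $c'$ consistent with the observed OT plans, so $\pi^{(k)} \in \OPT_{c'}(\mu^{(k)}, \nu^{(k)})$ for each $k$. Because $\OPT_{c'}(\mu^{(k)}, \nu^{(k)})$ is a face of $\Pi(\mu^{(k)}, \nu^{(k)})$ and, by the positivity $\lambda > 0$ in \autoref{pr:ri}, $\pi^{(k)}$ is a \emph{strictly positive} convex combination of the $u^{(\ell,k)}$, each $u^{(\ell,k)}$ itself must lie in $\OPT_{c'}(\mu^{(k)}, \nu^{(k)})$ --- a face containing a proper convex combination of points contains each of those points. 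Consequently $\inner{c'}{\pi^{(k)} - u^{(\ell,k)}} = 0$ for all $\ell, k$, and the same identities hold for $c$ by \autoref{pr:ri}; subtracting yields $c' - c \in V^\perp$. The identifiable slack is therefore contained in the image of $V^\perp$ in $\shiftc^{N \times M} = \R^{N \times M}/W$, and since $W \subseteq V^\perp$ its dimension equals
\[
    \dim V^\perp - \dim W = \bigl(NM - ((N-1)(M-1) - S)\bigr) - (N+M-1) = S,
\]
i.e., the span of $S$ linearly independent vectors in $\shiftc^{N \times M}$.

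For part 2 (the converse under the stronger assumption $\coh\{u^{(\ell,k)} : \ell \in \idn{L_k}\} = \OPT_c(\pi^{(k)})$), I will show that the identifiable slack attains the full dimension $S$. The hypothesis delivers the strict inequality $\inner{c}{v - u^{(\ell_0,k)}} > 0$ for every extreme point $v \in \EP(\mu^{(k)}, \nu^{(k)}) \setminus \coh\{u^{(\ell,k)} : \ell \in \idn{L_k}\}$, since such a $v$ is not optimal for $c$. I would then pick a linear complement $W' \subseteq V^\perp$ of $W$ (so $\dim W' = S$) and a basis $w_1, \ldots, w_S$ of $W'$; these are linearly independent modulo shifts. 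For $w$ in a sufficiently small neighborhood of $0$ in $W'$, the equalities $\inner{c+w}{\pi^{(k)} - u^{(\ell,k)}} = 0$ persist because $w \in V^\perp$, while the strict inequalities $\inner{c+w}{v - u^{(\ell_0,k)}} > 0$ persist by continuity. Hence $\coh\{u^{(\ell,k)} : \ell \in \idn{L_k}\} \subseteq \OPT_{c+w}(\mu^{(k)}, \nu^{(k)}) \ni \pi^{(k)}$, so $c+w$ is consistent, producing an $S$-dimensional family of consistent costs modulo shifts.

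The main obstacle is the proper-convex-combination step in part 1; this is precisely where the positivity hypothesis $\lambda > 0$ in \autoref{pr:ri} becomes essential, since without it extreme points in the decomposition could lie off the face $\OPT_{c'}$ and the identity $\inner{c'}{\pi^{(k)} - u^{(\ell,k)}} = 0$ could fail. A secondary subtlety is that the converse in part 2 is intrinsically a local statement: large perturbations within $V^\perp$ may activate vertices outside $\coh\{u^{(\ell,k)} : \ell \in \idn{L_k}\}$ and destroy consistency, so one establishes the dimensional lower bound via a neighborhood argument rather than a global equality between the set of consistent costs and $c + V^\perp$.
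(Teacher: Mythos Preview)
Your argument is correct and, for part~1, essentially identical to the paper's: both show that any consistent cost lies in $V^\perp$, observe $W \subseteq V^\perp$, and count dimensions. Your formulation is in fact a bit more explicit than the paper's in making clear that the face argument (``a face containing a strictly positive convex combination contains each summand'') is what forces $u^{(\ell,k)} \in \OPT_{c'}(\pi^{(k)})$ for \emph{every} consistent $c'$, not just the true $c$.

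For part~2 the two proofs differ in execution but not in substance. The paper argues by contradiction: it supposes $\pi^{(k)} \notin \OPT_{c+\tau c'}(\pi^{(k)})$ along a sequence $\tau_s \to 0$, extracts optimal extreme points $\gamma_s$, uses finiteness of $\EP(\pi^{(k)})$ to get $\gamma_s \equiv \gamma$ eventually, and identifies $\gamma$ as some $u^{(\ell,k)}$ via the hypothesis $\coh\{u^{(\ell,k)}\} = \OPT_c(\pi^{(k)})$, which then contradicts suboptimality. You instead exploit finiteness directly: there are only finitely many strict inequalities $\inner{c}{v - u^{(\ell_0,k)}} > 0$ to preserve, so a single small-neighborhood choice works uniformly. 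Your route is shorter and avoids the subsequence machinery; the paper's route makes the role of the equality hypothesis slightly more visible (it is precisely what pins down the limit $\gamma$). Either way the content is the same local-stability statement.
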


To prove this, we first use that by optimality the cost matrix $c$ must lie in the orthogonal complement of $\spanop \{ \pi^{(k)} - u^{(\ell,k)} : \ell \in \idn{L_k},\,k \in \idn{K} \}$. Note that this complement has dimension $S + (N + M  - 1)$, where $N + M - 1$ of these are generated by shift equivalence \eqref{eq:shift_equiv}, i.e., $\{ a \oplus b : a \in \R^N,\; b \in \R^M\}$. As discussed before, we cannot hope to identify these $N + M - 1$ dimensions via OT plans alone, which leaves us with the remaining $S$ dimensions.

For the converse direction, note that $\coh \{u^{(\ell, k)} : \ell \in \idn{L_k} \} =\OPT_c(\pi^{(k)})$ means that we observe the full optimal face of $\Pi(\pi^{(k)})$ for all $k \in \idn{K}$. Intuitively, if this is not the cases, then the orthogonal complement of $\spanop \{ \pi^{(k)} - u^{(\ell,k)} : \ell \in \idn{L_k},\,k \in \idn{K} \}$ contains a direction that is consistent with the observed OT plans. We use this idea to show that if we perturb $c$ by any $c'$ in the orthogonal complement, i.e., we consider $c_{\tau} = c + \tau c'$, then eventually $c_{\tau}$ will always be a consistent cost matrix as $\tau \to 0$.

\begin{proof}
For $k \in \idn{K}$ it holds by $\pi^{(k)}, u^{(1,k)}, \ldots, u^{(L_k, k)} \in \OPT_c(\pi^{(k)})$ that
\begin{equation*}
 	 \inner{c}{\pi^{(k)}} = \inner{c}{u^{(\ell, k)}} \qquad \text{for all } \ell \in \idn{L_k}\,.
 \end{equation*}
This implies that
\begin{equation*}
    \spanop \left\{ \pi^{(k)} - u^{(\ell,k)}: \ell \in \idn{L_k},\,k \in \idn{K} \right\} \perp c
\end{equation*}
and as a consequence,
\begin{equation*}
    c \in \left[ \spanop \left\{ \pi^{(k)} - u^{(\ell,k)}: \ell \in \idn{L_k},\,k \in \idn{K} \right\} \right]^\perp\,.
\end{equation*}
By assumption, this orthogonal complement has dimension $$NM - (N-1)(M-1) + S = N + M + S - 1.$$ We show that $N + M - 1$ of these dimensions are spanned by $\{ a \oplus b : a \in \R^N,\; b \in \R^M\}$. To this end, we prove that $\left\{ \pi^{(k)} - u^{(\ell,k)} : \ell \in \idn{L_k},\,k \in \idn{K} \right\}$ and $\{ a \oplus b : a \in \R^N, \; b \in \R^M\}$ are linearly independent. For a fixed $k \in \idn{K}$, suppose that there exists scalars $\lambda^{(k)}_\ell \in \R$, $\ell \in \idn{L_k}$, and $R \in \N$ pairs $(a^{(r)}, b^{(r)}) \in \R^{N} \times \R^{M}$ with scalars $\tau_r \in \R$, $r \in \idn{R}$, such that
\begin{equation*}
    v \defeq \sum_{k=1}^K \sum_{\ell=1}^{L_k} \lambda^{(k)}_\ell [\pi^{(k)} - u^{(\ell,k)}] = \sum_{r=1}^R \tau_r [a^{(r)} \oplus b^{(r)}]\,.
\end{equation*}
Then, it follows that
\begin{align*}
    \norm{v}_2^2 &= \inner*{ \sum_{k=1}^K \sum_{\ell=1}^{L_k} \lambda^{(k)}_\ell [\pi^{(k)} - u^{(\ell,k)}] }{ \sum_{r=1}^R \tau_r [a^{(r)} \oplus b^{(r)}] } \\
    &= \sum_{k=1}^K \sum_{\ell=1}^{L_k} \sum_{r=1}^R \lambda^{(k)}_\ell \tau_r \inner{\pi^{(k)} - u^{(\ell,k)}}{a^{(r)} \oplus b^{(r)}} \\
    &= \sum_{k=1}^K \sum_{\ell=1}^{L_k} \sum_{r=1}^R \lambda^{(k)}_\ell \tau_r \left[ \sum_{i=1}^N a_i^{(r)} (\mu^{(k)}_i - \mu^{(k)}_i) + \sum_{j=1}^M b_j^{(r)} ( \nu^{(k)}_j - \nu^{(k)}_j)   \right] = 0\,,
\end{align*}
which implies $v = 0$ and \textit{a fortiori}  $\left\{ \pi^{(k)} - u^{(\ell,k)} : \ell \in \idn{L_k},\,k \in \idn{K} \right\}$ and $\{ a \oplus b : a \in \R^N, \; b \in \R^M\}$ are linearly independent. Hence, $c$ is determined in $\shiftc^{N \times M}$ up to the remaining $S$ dimensions.

For the converse, assume that
 \begin{equation*}
 	 \dim (\spanop \{ \pi^{(k)} - u^{(\ell,k)} : \ell \in \idn{L_k},\,k \in \idn{K} \}) < (N-1)(M-1) - S \,.
 \end{equation*}
Then, there exists more than $S$ linearly independent vectors, so that there exist  $c'$ such that
\begin{equation*}
	c, c'\in \left[ \spanop \left\{ \pi^{(k)} - u^{(\ell,k)} : \ell \in \idn{L_k},\,k \in \idn{K} \right\} \right]^{\perp}\,.
\end{equation*}
Then,
\begin{equation} \label{eq:c_c_prime_inner_zero}
    \inner{c}{\pi^{(k)} - u^{(\ell,k)}} = 0 = \inner{c'}{\pi^{(k)} - u^{(\ell,k)}} \qquad \text{for all } \ell \in \idn{L_k},\,k \in \idn{K}.
\end{equation}
Hence,
\begin{equation*}
	\inner{c}{\pi^{(k)}} = \inner{c}{u^{(\ell,k)}} \quad {\rm and} \quad \inner{c'}{\pi^{(k)}} = \inner{c'}{u^{(\ell,k)}} \qquad \text{for all } \ell \in \idn{L_k},\, k \in \idn{K}.
\end{equation*}
We claim that there exists $ \tau(k)>0$ such that for all $ 0\leq \tau\leq \tau(k)  $ it holds that $\pi^{(k)} \in \OPT_{c_\tau}(\pi^{(k)})$ where $c_\tau = c + \tau c'$. We prove it by contradiction. Assume that for all $ \tau(k)>0$ there exists $ 0\leq \tau\leq \tau(k)  $ such that $\pi^{(k)} \notin \OPT_{c_\tau}(\pi^{(k)})$. Then, we can create a sequence of positive numbers  $\tau_{s} > 0$ with $\tau_s\to 0$ for $s \to \infty$ and a sequence of couplings $\gamma_s \in \OPT_{c_{\tau_s}}(\pi^{(k)}) \cap \EP(\pi^{(k)})$ such that
\begin{equation}
    \label{eq:contradictionTheoremIdentOnlyPlans}
     \inner{c_{\tau_s}}{\pi^{(k)}} > \inner{c_{\tau_s}}{\gamma_s}.
\end{equation}
We can further assume (subtracting, if necessary, a further subsequence) that $\gamma_s\to \gamma \in \Pi(\pi^{(k)})$ as $s\to \infty$. Since the set of extreme points is finite, it follows that $\gamma \in \EP(\pi^{(k)})$ and $\gamma_s = \gamma$ for all $s \geq s_0$ for some $s_0 \in \N$. Since, for any $\pi \in \Pi(\pi^{(k)})$ it holds that $\inner{c_{\tau_s}}{\gamma_s} \leq \inner{c_{\tau_s}}{\pi}$, we have
\begin{align*}
    \inner{c_{\tau_s}}{\gamma_s-\gamma} + \inner{c_{\tau_s}}{\gamma} \leq \inner{c_{\tau_s}}{\pi}.
\end{align*}
Taking limits, we get
 \begin{align*}
    \inner{c}{\gamma} \leq \inner{c}{\pi}, \qquad \text{ for all } \pi \in \Pi(\pi^{(k)})\,,
\end{align*}
so that $\gamma \in \OPT_c(\pi^{(k)}) \cap \EP(\pi^{(k)})$. As $\coh\left(\{u^{(\ell, k)} : \ell \in \idn{L_k} \} \right)=\OPT_c(\pi^{(k)})$, we can assume without loss of generality that $\gamma= u^{(1, k)}$. We notice that in such a case
\begin{equation*}
	\inner{c}{\pi^{(k)} - u^{(1,k)}} = 0.
\end{equation*}
Using \eqref{eq:c_c_prime_inner_zero}, we have for all $s \geq s_0$ that
\begin{equation*}
    \inner{c_{\tau_s}}{\gamma_s} = \inner{c}{u^{(1,k)}} + \tau_s \inner{c'}{u^{(1,k)}} = \inner{c}{\pi^{(k)}} + \tau_s \inner{c'}{\pi^{(k)}} = \inner{c_{\tau_s}}{\pi^{(k)}}\,,
\end{equation*}
which contradicts \eqref{eq:contradictionTheoremIdentOnlyPlans}. Therefore, the claim holds. Then, for any $k\in \idn{K}$, the observed plan $\pi^{(k)}$ is optimal for any of cost matrix $c_{\tau}=c+\tau\, c'$, with $0\leq \tau\leq \min \{\tau(1), \ldots, \tau(K)\}$, so that identifiability cannot hold in $\shiftc^{N \times M}$ up to the span of $S$ linearly independent vectors.
\end{proof}

\begin{remark}[The case $S = 0$] \label{rem:pi_known_s_0}
    In \autoref{thm:ident_only_plans} with $S = 0$, the given dimension of $(N-1)(M-1)$ can be attained. Namely, a single transport polytope can have this affine dimension \cite{Deloera2013}. Note that in this case it follows for the underlying cost matrix that $c \shifteq 0$.
\end{remark}

\begin{remark}[The case $S = 1$] \label{rem:pi_known_s_1}
    In \autoref{thm:ident_only_plans}, if $S = 1$ then the cost matrix $c$ is determined up to shift equivalence and multiplication by a scalar, i.e., $c$ can (up to shift equivalence) be written as $c = \tau c'$ for $\tau > 0$ and some cost matrix $c'$ that has the right sign (determined such that all the observed OT plans are optimal for $c'$). If we care only about calculating the OT plans, then the true value of $\tau$ does not matter.
\end{remark}

\begin{remark} \label{rem:inv_feas_reg}
	A related result to \autoref{thm:ident_only_plans} is given in \cite[Proposition~6]{Tavaslioglu2018}. They characterize the inverse-feasibility region for $K = 1$ of an LP (what corresponds to the set of consistent cost matrices here) as the Minkowski sum of a cone and span, both depending on the active constraints of the LP. More specifically, in our setting they assert that every consistent cost matrix $c$ for the observed OT plan $\pi^{(1)}$ is of the form
	\begin{equation} \label{eq:cone_span}
		c = \sum_{(i,j) \in [\supp \pi^{(1)}]^\complement} \lambda_{i,j} e^{i,j} + a \oplus b\,, \qquad \lambda \in \R^{N \times M}_+,\, a \in \R^N, \, b \in \R^M\,,
	\end{equation}
	where $[\supp \pi^{(1)}]^\complement = \idn{N} \times \idn{M} \setminus \supp \pi^{(1)}$ is the complement of the support and $e^{i,j} \in \R^{N \times M}$ has the entry $1$ at $(i, j)$ and zero else. In particular, this implies that if $[\supp \pi^{(1)}]^\complement = \emptyset$, then $c \shifteq 0$ as in \autoref{rem:pi_known_s_0}. Similarly, if $[\supp \pi^{(1)}]^{\complement} = \{ (i, j) \}$ for one pair $(i, j) \in \idn{N} \times \idn{M}$, then $c \shifteq e^{i,j}$ and \autoref{rem:pi_known_s_1} applies.

	Furthermore, in the case $K > 1$, it follows that the set of consistent cost matrices is the intersection of cost matrices of the form \eqref{eq:cone_span} where $\pi^{(1)}$ is substituted by $\pi^{(k)}$, $k\in\idn{K}$. Due to the additive structure, a representation like \eqref{eq:cone_span} for these intersections is not straightforward or might not even hold. This makes it challenging to extend \eqref{eq:cone_span} to general $K$ in order to derive identifiability.
\end{remark}

\begin{remark}[Computation of the span containing $c$] \label{rem:comp_complexity_plan}
    The proof of \autoref{thm:ident_only_plans} reveals how the span which contains $c$ (up to shift equivalence) can be calculated from the observed OT plans. In fact, this can be implemented by solving a linear system of equations. Let
    \begin{equation*}
        E^{(k)} \defeq \begin{pmatrix}
            [\pi^{(k)} - u^{(1,k)}]^\transp \\
            \vdots \\
            [\pi^{(k)} - u^{(L_k,k)}]^\transp \\
        \end{pmatrix}\,, \qquad E_K \defeq \begin{pmatrix}
            E^{(1)} \\ \vdots \\ E^{(K)}
        \end{pmatrix}\,.
    \end{equation*}
    Then, $c$ must satisfy
    \begin{equation*}
        E_K c = 0\,.
    \end{equation*}
	This is a system of $L \defeq \sum_{k=1}^K L_k$ equations of size $NM$ and can be solved with Gaussian elimination in $\bigO(\min(L, NM) L NM)$ time. Furthermore, recall that in a preliminary step the extreme points from \autoref{pr:ri} need to be computed, see \autoref{rem:comp_ri} for its computational complexity. Assuming $N = M$ and $L \geq N^2$, we get overall complexity of $\bigO(N^4 L)$. This can be quite computationally intensive in $N$, in particular as $L$ can potentially scale exponentially in $N$.
\end{remark}

\subsection{Identifiability from Total Costs and Optimal Transport Plans}\label{Sec:planCost}

Now, we consider the case where we only observe the total costs \ref{enum:ot_cost} and OT plans \ref{enum:ot_plan}, i.e.,
\begin{equation} \label{eq:total_costs_plans_known0}
	\alpha^{(k)} = \OT_c(\mu^{(k)}, \nu^{(k)}) = \inner{c}{\pi^{(k)}}\,, \quad k \in \idn{K} \,.
\end{equation}
This setting is much more simple than that in \autoref{thm:ident_only_total} or \autoref{thm:ident_only_pot}, as we directly observe OT plans and therefore do not have to consider all possible ones. Recall that by \autoref{pr:ri} the observed OT plans $\pi^{(k)}$ potentially contain more information depending on where they lie in the corresponding polytope $\Pi(\pi^{(k)})$.

\begin{theorem} \label{thm:ident_total_plan}
	Suppose that we observe $K$ distinct pairs of marginals and the corresponding total costs \ref{enum:ot_cost} and OT plans \ref{enum:ot_plan}. Let $\{ u^{(\ell, k)} : \ell \in \idn{L_k} \}$, $k \in \idn{K}$, be as in \autoref{pr:ri}. Then, the underlying cost matrix $c$ is identifiable in $\R^{N \times M}$ if and only if the system
	\begin{align*}
		\inner{c}{v^{(k)}} &\geq \alpha^{(k)}, & v^{(k)} &\in \EP(\mu^{(k)}, \nu^{(k)}) \setminus \{ u^{(\ell,k)} : \ell \in \idn{L_k} \}, \\
		\inner{c}{u^{(\ell,k)}} &= \alpha^{(k)}, & \ell &\in \idn{L_k}, \; k \in \idn{K},
	\end{align*}
	has a unique solution. Note that if a solution exists, then we have existence of the underlying cost matrix.
\end{theorem}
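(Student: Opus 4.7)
The plan is to show that the set of cost matrices consistent with the observations \hyperlink{eq:obs}{(OBS)} coincides exactly with the solution set of the stated system. Identifiability is defined as the underlying $c$ being uniquely determined by the observations, which then translates immediately into the system having a unique solution.

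First I would show the easy direction: any consistent cost matrix $c'$ solves the system. Indeed, by \hyperlink{eq:obs}{(OBS)} we have $\alpha^{(k)} = \OT_{c'}(\mu^{(k)},\nu^{(k)})$, so $\inner{c'}{v^{(k)}} \geq \alpha^{(k)}$ for every extreme point $v^{(k)} \in \EP(\mu^{(k)},\nu^{(k)})$ (the OT value being the minimum over the polytope, attained at an extreme point). Moreover, $\pi^{(k)}$ is an OT plan w.r.t.\ $c'$, so $\inner{c'}{\pi^{(k)}} = \alpha^{(k)}$. By \autoref{pr:ri}, $\pi^{(k)} = \sum_{\ell=1}^{L_k} \lambda_\ell u^{(\ell,k)}$ with strictly positive $\lambda_\ell$. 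Substituting and using $\inner{c'}{u^{(\ell,k)}} \geq \alpha^{(k)}$ forces $\inner{c'}{u^{(\ell,k)}} = \alpha^{(k)}$ for each $\ell$, since a strict convex combination of numbers $\geq \alpha^{(k)}$ can equal $\alpha^{(k)}$ only when all of them are equal to $\alpha^{(k)}$. Hence $c'$ satisfies the system.

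Next I would establish the converse: any solution $c'$ of the system is a consistent cost matrix. The equality constraints give $\inner{c'}{u^{(\ell,k)}} = \alpha^{(k)}$, while the inequalities state that $\inner{c'}{v^{(k)}} \geq \alpha^{(k)}$ for the remaining extreme points. Since the linear function $\pi \mapsto \inner{c'}{\pi}$ attains its minimum over the polytope $\Pi(\mu^{(k)},\nu^{(k)})$ at an extreme point, it follows that $\OT_{c'}(\mu^{(k)},\nu^{(k)}) = \alpha^{(k)}$. Writing $\pi^{(k)}$ as the convex combination $\sum_\ell \lambda_\ell u^{(\ell,k)}$ from \autoref{pr:ri} and using linearity shows $\inner{c'}{\pi^{(k)}} = \alpha^{(k)}$, so $\pi^{(k)} \in \OPT_{c'}(\mu^{(k)},\nu^{(k)})$. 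Thus $c'$ is consistent with \hyperlink{eq:obs}{(OBS)}.

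Combining both directions, the set of consistent cost matrices equals the solution set of the system. Identifiability of $c$ in $\R^{N\times M}$ is, by definition, the statement that this set is a singleton, which is exactly uniqueness of the solution. The parenthetical remark that existence of a solution implies existence of the underlying $c$ then drops out from the first direction applied in reverse: any solution produces a consistent cost matrix. I do not anticipate a serious obstacle here; the only point requiring care is the argument that positivity of the coefficients $\lambda_\ell$ in \autoref{pr:ri}, together with the inequalities $\inner{c'}{u^{(\ell,k)}} \geq \alpha^{(k)}$, forces equality at each extreme point, which is a clean one-line convexity argument.
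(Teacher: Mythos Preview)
Your proposal is correct and matches the paper's approach. The paper does not write out an explicit proof of this theorem; it treats the result as immediate in the spirit of the one-line proof of \autoref{thm:ident_only_total} (a cost matrix is consistent with the observations if and only if it satisfies the system), with the only additional ingredient being \autoref{pr:ri} to pass from optimality of $\pi^{(k)}$ to equality at each $u^{(\ell,k)}$ via the strict-convex-combination argument you give.
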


\begin{remark}[Computational complexity] \label{rem:comp_complexity_total_plan}
	The system in \autoref{thm:ident_total_plan} is an LP with variable size $NM$ and $W \defeq \sum_{k=1}^{K} \abs{\EP(\mu^{(k)}, \nu^{(k)})}$ constraints. In particular, identifiability is thus linked to the uniqueness of solutions of LPs \cite{Mangasarian1979,Appa2002}. To check uniqueness of an LP, we can employ the PUFAS algorithm \cite{Appa2002}: Given a solution $c$ to the original LP, the same LP with adjusted objective is solved which yields another solution $c'$ to the original LP. We then have uniqueness if $c \neq c'$. Note that the adjusted objective is obtained by a straightforward transformation of $c$. Hence, checking uniqueness according to \autoref{thm:ident_total_plan} computationally boils down to solving two LPs with variable size $NM$ and $W$ constraints. As mentioned before, note that the number of extreme points can depend exponentially on $N$ and $M$, which then becomes computationally infeasible, in general.
\end{remark}

Dropping the inequalities in the system of \autoref{thm:ident_total_plan}, we get a sufficient condition for identifiability in terms of a linear equation:
\begin{equation} \label{eq:total_costs_plans_known_lin_system}
	\inner{c}{u^{(\ell,k)}} = \alpha^{(k)}, \qquad \ell \in \idn{L_k},\,k \in \idn{K}.
\end{equation}
From this we can get a simple identifiability criterion in terms of the OT plans. The proof idea and intuition behind the below theorem are very similar to the one for \autoref{thm:ident_only_plans} where we only observe the OT plans. In a sense, this is a stricter version on account of additionally observing the total costs.

\begin{theorem} \label{thm:ident_total_plan_eq}
    Suppose that we observe $K$ distinct pairs of marginals and the corresponding total costs \ref{enum:ot_cost} and OT plans \ref{enum:ot_plan}. Let $\{ u^{(\ell, k)} : \ell \in \idn{L_k} \}$, $k \in \idn{K}$, be as in \autoref{pr:ri}. Then, the following holds:
    \begin{enumerate}[a)]
        \item If the system $\{ u^{(\ell, k)} : k \in \idn{K},\, \ell \in \idn{L_k} \}$ contains $NM$ linearly independent vectors, then the cost matrix $c$ is identifiable in $\R^{N \times M}$.
        \item If $\coh \{u^{(\ell, k)} : \ell \in \idn{L_k} \} =\OPT_c(\pi^{(k)})$ for all $k \in \idn{K}$, then the converse holds.
    \end{enumerate}
\end{theorem}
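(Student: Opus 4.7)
The plan is to split the two directions and, for part a), reduce identifiability to the unique solvability of the purely linear system \eqref{eq:total_costs_plans_known_lin_system}; for part b), I would borrow the perturbation argument from the proof of \autoref{thm:ident_only_plans}.

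For part a), I would view \eqref{eq:total_costs_plans_known_lin_system} as a linear system in the vectorized $c \in \R^{NM}$, parameterized by the vectors $u^{(\ell,k)}$. If these contain $NM$ linearly independent elements, they span $\R^{NM}$, so the map $c \mapsto (\inner{c}{u^{(\ell,k)}})_{\ell,k}$ is injective. By \autoref{pr:ri} and the optimality of $\pi^{(k)}$ under $c$, the underlying cost matrix satisfies $\inner{c}{u^{(\ell,k)}} = \inner{c}{\pi^{(k)}} = \alpha^{(k)}$, hence solves the linear system; by injectivity it is the unique solution. Since the system in \autoref{thm:ident_total_plan} is more restrictive than \eqref{eq:total_costs_plans_known_lin_system}, identifiability in $\R^{N \times M}$ follows a fortiori.

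For part b), I would argue by contrapositive. Assume $\coh\{u^{(\ell,k)} : \ell \in \idn{L_k}\} = \OPT_c(\pi^{(k)})$ for all $k$ and that $\{u^{(\ell,k)} : k \in \idn{K},\, \ell \in \idn{L_k}\}$ does not contain $NM$ linearly independent elements. Then there exists $c' \in \R^{N \times M} \setminus \{0\}$ with $\inner{c'}{u^{(\ell,k)}} = 0$ for all $\ell, k$. Set $c_\tau \defeq c + \tau c'$. Two things would need checking: (i) $\inner{c_\tau}{\pi^{(k)}} = \alpha^{(k)}$, which follows directly by expressing $\pi^{(k)}$ as a convex combination of the $u^{(\ell,k)}$ via \autoref{pr:ri}; and (ii) $\pi^{(k)} \in \OPT_{c_\tau}(\pi^{(k)})$ for all $\tau > 0$ sufficiently small. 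For (ii) I would reproduce the compactness argument from the converse direction of \autoref{thm:ident_only_plans}: assuming the contrary, extract a sequence $\tau_s \to 0^+$ and competing extreme points $\gamma_s \in \EP(\pi^{(k)})$ with $\inner{c_{\tau_s}}{\gamma_s} < \inner{c_{\tau_s}}{\pi^{(k)}}$; by finiteness of $\EP(\pi^{(k)})$ some value $\gamma$ recurs infinitely often, the limit passage yields $\gamma \in \OPT_c(\pi^{(k)})$, and by the hypothesis $\coh\{u^{(\ell,k)}\} = \OPT_c(\pi^{(k)})$ we may take $\gamma = u^{(\ell_0,k)}$ for some $\ell_0$, giving $\inner{c_{\tau_s}}{\gamma_s} = \inner{c_{\tau_s}}{\pi^{(k)}}$ and the sought contradiction. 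Choosing $\tau$ smaller than the minimum over $k$ of the $k$-specific thresholds produces consistent cost matrices $c_\tau \neq c$, precluding identifiability.

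The main obstacle is step (ii) of the converse: preserving optimality of $\pi^{(k)}$ under perturbation genuinely uses the assumption $\coh\{u^{(\ell,k)}\} = \OPT_c(\pi^{(k)})$, which is precisely where the converse may fail if $\pi^{(k)}$ does not already witness the full optimality set. The remaining steps are essentially linear-algebraic bookkeeping in the spirit of \autoref{thm:ident_only_plans}.
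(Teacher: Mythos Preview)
Your proposal is correct and follows essentially the same approach as the paper: part a) is the one-line linear-algebra observation that $NM$ independent vectors make the linear system \eqref{eq:total_costs_plans_known_lin_system} uniquely solvable, and part b) is precisely the perturbation-plus-finiteness argument borrowed from the converse of \autoref{thm:ident_only_plans}, which the paper also invokes explicitly. The only cosmetic difference is that you spell out step (i) and the appeal to \autoref{thm:ident_total_plan} more explicitly than the paper does.
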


\begin{proof}
    If the system contains $NM$ linearly independent vectors, then the solution $c$ to \eqref{eq:total_costs_plans_known_lin_system} is fully determined.

    For the converse, suppose that the system $\{ u^{(\ell, k)} : \ell \in \idn{L_k},\,k \in \idn{K} \}$ contains less than $NM$ linearly independent vectors. In particular, there exists $c' \neq 0$ in the kernel of the linear equations \eqref{eq:total_costs_plans_known_lin_system}. Define for $\tau > 0$ the cost matrix $c_\tau \defeq c + \tau c'$. We claim that for all $k \in \idn{K}$ there exists a $\tau(k) > 0$ such that for all $0 < \tau \leq \tau(k)$ it holds that $\pi^{(k)}$ is optimal w.r.t.\ the cost matrix $c_{\tau}$. Fix $k \in \idn{K}$ and assume that this is not the case, i.e., for all $\tau(k) > 0$ there exists a $0 < \tau \leq \tau(k)$ such that $\inner{\pi^{(k)}}{c_\tau} = \alpha^{(k)} > \OT_{c_\tau}(\mu^{(k)}, \nu^{(k)})$. Hence, we can construct sequences $\tau_s > 0$ and $\gamma_s \in \OPT_{c_{\tau_s}}(\pi^{(k)}) \cap \EP(\pi^{(k)})$ such that $\alpha^{(k)} > \inner{c_{\tau_s}}{\gamma_s}$ and $\tau_s \to 0$ for $s \to \infty$. As in the proof of \autoref{thm:ident_only_plans} it follows that there exist $\gamma \in \OPT_c(\pi^{(k)})$ and $s_0 \in \N$ such that $\gamma_s = \gamma$ for all $s \geq s_0$. If $\coh \{u^{(\ell, k)} : \ell \in \idn{L_k} \}=\OPT_c(\pi^{(k)})$, then $\gamma = u^{(\ell, k)}$ for some $\ell \in \idn{L_k}$ and thus $\inner{c'}{\gamma} = 0$. Hence, $\inner{c_\tau}{\gamma_s} = \inner{c_\tau}{\gamma}=\inner{c}{\gamma} = \alpha^k$, a contradiction. Therefore, for any $k \in \idn{K}$, the observed plan $\pi^{(k)}$ is optimal for the cost matrix $c_{\tau} = c + \tau c'$, with $0 \leq \tau\leq \min \{\tau(1), \ldots, \tau(K)\}$, so that $c$ is not identifiable. This concludes the proof.
\end{proof}

\begin{remark}[Computational complexity]
	The uniqueness condition of \autoref{thm:ident_total_plan_eq} a) can be checked again via Gaussian elimination and computation of the extreme points from \autoref{pr:ri}. Hence, the computational complexity mirrors the one in the case where we only observe the OT plans, see \autoref{rem:comp_complexity_plan}.
\end{remark}

\begin{remark}[Repeated pairs of marginals] \label{rem:observe_distinct_plans}
    In \autoref{thm:ident_total_plan} and \autoref{thm:ident_total_plan_eq} we assume that the $K$ observed pairs of marginals are distinct. If they are not, say $(\mu^{(1)}, \nu^{(1)}) = \ldots = (\mu^{(R)}, \nu^{(R)})$, then we can set the OT plan of this pair of marginals to $\frac{1}{R} \sum_{r=1}^R \pi^{(r)}$ and both theorems remain valid. Note that this joins the information of the individual OT plans $\pi^{(1)}, \ldots, \pi^{(R)}$ in the sense that the resulting face given in \autoref{pr:ri} is maximal in terms of the available information.
\end{remark}

\begin{remark}[Shift equivalence and total costs]
    Although shift equivalent \eqref{eq:shift_equiv} cost matrices induce the same OT plans, the total cost may differ. Hence, in the setting where we also observe the total costs, we can achieve identifiability in $\R^{N \times M}$ and not just $\shiftc^{N \times M}$.

    However, this setting can still be influenced by a stronger version of shift equivalency given a fixed number of marginals: Assume that there exist $a \in \R^N$ and $b \in \R^M$ such that for all $k \in \idn{K}$ it holds that
    \begin{equation*}
        \inner{a}{\mu^{(k)}} + \inner{b}{\nu^{(k)}} = 0\,.
    \end{equation*}
    Then, the shift $a \oplus b$ does not change the total cost for these specific $K$ marginals. Hence, when one of $a$ or $b$ is non-zero, then we have another cost matrix $c + a \oplus b \neq c$ that has the same OT plans and total costs for the given $K$ pairs of marginals. The condition above can be written as
    \begin{equation*}
        A a + B b = 0,
    \end{equation*}
    where $A \in \R^{K \times N}$ and $B \in \R^{K \times M}$ have the rows $\mu^{(1)}, \ldots, \mu^{(K)}$ and $\nu^{(1)}, \ldots, \nu^{(K)}$, respectively. This has a non-trivial solution $(a, b) \neq 0$ if and only if
    \begin{equation*}
        \ker A \neq \{ 0 \} \quad \text{or} \quad \ker B \neq \{ 0 \} \quad \text{or} \quad \im A \cap \im (-B) \neq \{ 0 \}\,.
    \end{equation*}
\end{remark}

We now illustrate \autoref{thm:ident_total_plan} and \autoref{thm:ident_total_plan_eq} with some examples. The first one shows that there always exist OT plans and marginals such that the underlying cost matrix $c$ is identifiable.

\begin{example}
	Let $K = NM$ and suppose that we observe the OT plans
	\begin{equation*}
		\pi^{(i + [j-1]M)} = \delta_i \otimes \delta_j \,, \quad i \in \idn{N}, \, j \in \idn{M}\,,
	\end{equation*}
    with corresponding total costs $\alpha^{(k)}$, $k \in \idn{K}$. Here, $\delta_i$ and $\delta_j$ denote the Dirac measures at $i$ and $j$, respectively, and $\delta_i \otimes \delta_j$ is their product measure. Then, each $\bm{\pi}^{(k)}$ is equal to the $k$-th standard basis vector $\bm{e}_k$ and $c$ is therefore identifiable with $c_{i,j} = \alpha^{(i + [j-1]M)}$.
\end{example}

\begin{example}
    We give an example that shows that for \autoref{thm:ident_total_plan_eq} the additional assumption of $\coh \{u^{(\ell, k)} : \ell \in \idn{L_k} \} =\OPT_c(\pi^{(k)})$ for all $k \in \idn{K}$ for the converse direction is crucial. Suppose that for $N = M = 2$ we observe the $K = 3$ OT plans
    \begin{equation*}
		\pi^{(1)} = \begin{bmatrix} 3/20 & 1/4 \\ 3/5 & 0 \end{bmatrix}\,, \quad \pi^{(2)} = \begin{bmatrix} 0 & 1/2 \\ 1/2 & 0 \end{bmatrix}\,, \quad \pi^{(3)} = \begin{bmatrix} 1/2 & 1/10 \\ 0 & 2/5 \end{bmatrix}\,,
	\end{equation*}
	with corresponding total costs $\alpha^{(1)} = 1/3$ and $\alpha^{(2)} = \alpha^{(3)} = 1$. As $K < NM$, we cannot use \autoref{thm:ident_total_plan_eq} to obtain identifiability of the underlying cost matrix in $\R^{N \times M}$. However, in this case \autoref{thm:ident_total_plan} yields identifiability and the underlying cost matrix is given by
	\begin{equation*}
		c = \begin{bmatrix} -1/3 & 7/3 \\ -1/3 & 7/3 \end{bmatrix}\,.
	\end{equation*}
\end{example}

Note for \autoref{thm:ident_total_plan_eq} it is crucial that we know that the underlying cost matrix $c$ exists. Else, it could happen that the observed OT plans are not optimal for the unique $c$ solving the system of linear equations \eqref{eq:total_costs_plans_known_lin_system}.

\begin{example} \label{ex:incons_c_lin_eq}
    Consider the $K = 4$ linearly independent transport plans
	\begin{align*}
		\pi^{(1)} &= \begin{bmatrix}
			0  & 1/4 \\ 1/2 & 1/4
		\end{bmatrix},
		& \pi^{(2)} &= \begin{bmatrix}
			1/3 & 0 \\ 1/3 & 1/3
		\end{bmatrix},
		\\ \pi^{(3)} &= \begin{bmatrix}
			1/3 & 1/3 \\  0 & 1/3
		\end{bmatrix},
		& \pi^{(4)} &= \begin{bmatrix}
			4/11 & 3/11 \\ 4/11 & 0
		\end{bmatrix}
	\end{align*}
	with total costs $\alpha^{(1)} = \alpha^{(2)} = 1$ and $\alpha^{(3)} = \alpha^{(4)} = 2$. Then, it follows that the unique solution to \eqref{eq:total_costs_plans_known_lin_system} is given by
	\begin{equation*}
		c = \begin{bmatrix}
			8/3 & 10/3 \\ 1/3 & 0
		\end{bmatrix}\,.
	\end{equation*}
    However, the total costs for the marginals of the above transport plans w.r.t.\ $c$ are given by $3/4$, $1$, $2$ and $19/11$, respectively.
\end{example}

\begin{figure}

	\centering

\tikzset{every picture/.style={line width=1pt}} 

\tikzset{every picture/.style={line width=1.25pt}} 

\begin{tikzpicture}[x=0.75pt,y=0.75pt,yscale=-1,xscale=1]

\draw  [color={rgb, 255:red, 0; green, 0; blue, 0 }  ,draw opacity=0 ][fill={rgb, 255:red, 219; green, 219; blue, 219 }  ,fill opacity=1 ] (242.6,93.28) -- (303.6,133.33) -- (276.38,155.1) -- (208.58,158.57) -- (173.44,132.83) -- cycle ;
\draw  [fill={rgb, 255:red, 0; green, 0; blue, 0 }  ,fill opacity=1 ] (240.51,93.28) .. controls (240.51,92.13) and (241.45,91.2) .. (242.6,91.2) .. controls (243.75,91.2) and (244.69,92.13) .. (244.69,93.28) .. controls (244.69,94.44) and (243.75,95.37) .. (242.6,95.37) .. controls (241.45,95.37) and (240.51,94.44) .. (240.51,93.28) -- cycle ;
\draw [color={rgb, 255:red, 0; green, 0; blue, 0 }  ,draw opacity=1 ] [dash pattern={on 0.84pt off 2.51pt}]  (219.16,77.81) -- (328.26,149.47) ;
\draw [color={rgb, 255:red, 0; green, 0; blue, 0 }  ,draw opacity=1 ][fill={rgb, 255:red, 255; green, 0; blue, 0 }  ,fill opacity=1 ] [dash pattern={on 0.84pt off 2.51pt}]  (153.97,143.79) -- (268.61,78.59) ;
\draw    (303.6,133.33) -- (276.38,155.1) -- (208.58,158.57) -- (173.44,132.83) ;
\draw  [fill={rgb, 255:red, 0; green, 0; blue, 0 }  ,fill opacity=1 ] (171.35,132.83) .. controls (171.35,131.68) and (172.29,130.74) .. (173.44,130.74) .. controls (174.59,130.74) and (175.53,131.68) .. (175.53,132.83) .. controls (175.53,133.99) and (174.59,134.92) .. (173.44,134.92) .. controls (172.29,134.92) and (171.35,133.99) .. (171.35,132.83) -- cycle ;
\draw  [fill={rgb, 255:red, 0; green, 0; blue, 0 }  ,fill opacity=1 ] (301.51,133.33) .. controls (301.51,132.17) and (302.44,131.24) .. (303.6,131.24) .. controls (304.75,131.24) and (305.69,132.17) .. (305.69,133.33) .. controls (305.69,134.48) and (304.75,135.42) .. (303.6,135.42) .. controls (302.44,135.42) and (301.51,134.48) .. (301.51,133.33) -- cycle ;
\draw  [fill={rgb, 255:red, 0; green, 0; blue, 0 }  ,fill opacity=1 ] (274.29,155.1) .. controls (274.29,153.95) and (275.22,153.01) .. (276.38,153.01) .. controls (277.53,153.01) and (278.47,153.95) .. (278.47,155.1) .. controls (278.47,156.26) and (277.53,157.19) .. (276.38,157.19) .. controls (275.22,157.19) and (274.29,156.26) .. (274.29,155.1) -- cycle ;
\draw  [fill={rgb, 255:red, 0; green, 0; blue, 0 }  ,fill opacity=1 ] (206.49,158.57) .. controls (206.49,157.41) and (207.42,156.48) .. (208.58,156.48) .. controls (209.73,156.48) and (210.67,157.41) .. (210.67,158.57) .. controls (210.67,159.72) and (209.73,160.66) .. (208.58,160.66) .. controls (207.42,160.66) and (206.49,159.72) .. (206.49,158.57) -- cycle ;

\draw (236.06,69.65) node [anchor=north west][inner sep=0.75pt]  [font=\small,color={rgb, 255:red, 0; green, 0; blue, 0 }  ,opacity=1 ,xscale=1.5,yscale=1.5] [align=left] {$\displaystyle \pi $};

\end{tikzpicture}
	\caption{The polytope $\Pi(\mu, \nu)$ with observed OT plan $\pi$. Visually, we see that a cost matrix $c$ is consistent with $\pi$ (up to a sign) if the hyperplane $\{ u \in \R^{NM} :  \inner{u}{c} =  \inner{\pi}{c}\}$ lies ``above'' the polytope, i.e., it is only allowed to touch the parts of $\Pi(\mu, \nu)$ that lie on the dotted lines. In particular, to enforce this behavior only $\pi$ and its two neighboring extreme points (i.e., the extreme points that lie on a 1-dimensional face containing $\pi$) need to be considered.} \label{fig:optimize_parts}
\end{figure}

Lastly, we note that in \autoref{thm:ident_total_plan} it is not necessary to enforce the constraint $\inner{c}{v^{(k)}} \geq \alpha^{(k)}$ for all $v^{(k)} \in \EP(\mu^{(k)}, \nu^{(k)}) \setminus \{ u^{(\ell, k)} : \ell \in \idn{L_k} \}$, $k \in \idn{K}$. Indeed, according to the next lemma it is enough to consider the extreme points of the biggest faces containing $\pi^{(k)}$, $k \in \idn{K}$. For geometric intuition, see \autoref{fig:optimize_parts}.

\begin{lemma} \label{lemma:smaller_set_geq}
	Let $c \in \R^{N \times M}$ be a cost function. Suppose that we observe marginals $(\mu, \nu) \in \Delta_N \times \Delta_M$ with optimal face $F \subseteq \OPT_c(\mu, \nu)$ and total cost $\alpha = \OT_c(\mu, \nu)$. Denote with $G_1, \ldots, G_{R_F}$ the sets containing the extreme points of the highest dimensional faces of $\Pi(\mu, \nu)$ (excepting $\Pi(\mu, \nu)$ itself) that contain $F$ and define the set
	\begin{equation*}
		G \defeq \begin{cases}
			\bigcup_{r=1}^{R_F} G_r \setminus F & R_F \neq 1\,, \\
			\{ w \} & R_F = 1\,, \text{ for any } w \in \EP(\mu, \nu) \setminus F \,.
		\end{cases}
	\end{equation*}
	Then, the set of cost functions $c$ that is consistent with the observed OT information is given by the solutions to the system
	\begin{equation*}
		\inner{c}{u} = \alpha,  \qquad \inner{c}{v} \geq \alpha, \qquad u \in F,\qquad v \in G.
	\end{equation*}
\end{lemma}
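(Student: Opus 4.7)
The direction ``consistency implies the reduced system'' is immediate, since $G \subseteq \EP(\mu,\nu) \setminus F$ and every consistent $c$ already satisfies $\inner{c}{v} \geq \alpha$ for all $v \in \EP(\mu,\nu) \setminus F$. The substance of the lemma lies in the converse: if $c$ satisfies $\inner{c}{u} = \alpha$ on $F$ and $\inner{c}{g} \geq \alpha$ on $G$, then in fact $\inner{c}{w} \geq \alpha$ for every $w \in \EP(\mu,\nu)$.

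The key step is a purely geometric statement. Fix any $\bar u \in \operatorname{ri}(F)$ and set $L \defeq \aff(F) - \bar u$. I claim that
\[
    \Pi(\mu,\nu) - \bar u \;\subseteq\; L + \operatorname{cone}(G - \bar u).
\]
Granting this, each $w \in \Pi(\mu,\nu)$ admits a representation $w - \bar u = \sum_i \alpha_i (u_i - \bar u) + \sum_g \beta_g (g - \bar u)$ with $u_i \in F$, $\alpha_i \in \R$, $g \in G$, $\beta_g \geq 0$. Taking the inner product with $c$ and using $\inner{c}{u_i - \bar u} = 0$ together with $\inner{c}{g - \bar u} \geq 0$ yields
\[
    \inner{c}{w} - \alpha \;=\; \sum_g \beta_g \bigl( \inner{c}{g} - \alpha \bigr) \;\geq\; 0,
\]
which, combined with the equalities on $F$, gives $F \subseteq \OPT_c(\mu,\nu)$ and $\OT_c(\mu,\nu) = \alpha$. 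Observe that this argument is a direct application of Farkas' lemma: the claimed inclusion is precisely the certificate required for the target inequality to be a consequence of the reduced system.

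The claim is proved by analysing the tangent cone $T_{\bar u}(\Pi) \defeq \{v : \bar u + \varepsilon v \in \Pi(\mu,\nu) \text{ for some } \varepsilon > 0\}$, which contains $\Pi(\mu,\nu) - \bar u$. Since $\bar u \in \operatorname{ri}(F)$ and every face of a polytope is the intersection of the facets containing it, the facets of $\Pi(\mu,\nu)$ whose supporting hyperplanes pass through $\bar u$ are exactly $F_1,\dots,F_{R_F}$, so $T_{\bar u}(\Pi) = \bigcap_{i=1}^{R_F} \{v : n_i \cdot v \ge 0\}$ with $n_i$ the inward facet normal of $F_i$. The lineality space of this polyhedral cone is exactly $L$, and its pointed quotient $T_{\bar u}(\Pi)/L$ has extreme rays in bijective correspondence with the covering faces of $F$ in the face lattice of $\Pi(\mu,\nu)$. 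When $R_F \geq 2$, each covering face $F'$ is a proper face of $\Pi(\mu,\nu)$ strictly containing $F$, hence contained in some $F_i$; its extreme points outside $F$ therefore lie in $G_i \setminus F \subseteq G$ and generate the corresponding extreme ray after projection, establishing the claim. When $R_F = 1$, $F$ is itself a facet, the pointed quotient is a single ray $\R_+ n_1$, and any $w' \in \EP(\mu,\nu) \setminus F$ satisfies $n_1 \cdot (w' - \bar u) > 0$, so a single such $w'$ spans this ray --- matching the definition of $G$ in this degenerate case. The main obstacle is this tangent-cone/face-lattice bookkeeping; once the identification of extreme rays with covering faces is established and each covering face is placed inside some $F_i$, the rest of the proof reduces to the inner-product calculation above.
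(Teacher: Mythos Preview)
Your argument is correct and takes a genuinely different route from the paper's proof. The paper works directly with the hyperplane $H=\{x:\inner{c}{x}=\alpha\}$ inside $B=\aff(\Pi(\mu,\nu))$ and splits into cases. When $R_F=1$, $F$ is itself a facet, so $F$ determines $H\cap B$ and the single witness $w$ fixes on which side of $H$ the polytope lies. When $R_F>1$, the paper argues by contradiction: assuming a violating vertex $z$ with $\inner{c}{z}<\alpha$, it picks $y\in\operatorname{ri}(F)$ and intersects $\Pi(\mu,\nu)$ with a two-dimensional affine plane $E\supseteq[y,z]$ orthogonal to $F$; the two edges of the planar polygon $\Pi(\mu,\nu)\cap E$ emanating from $y$ lie in facets $F_i$, forcing $\inner{c}{\cdot}\ge\alpha$ on $\coh\{y,w_1,w_2\}$ while $\inner{c}{\cdot}<\alpha$ on $(y,z]$, a contradiction. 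Your approach replaces this case analysis and ad~hoc 2D slice by the single structural inclusion $\Pi(\mu,\nu)-\bar u\subseteq L+\operatorname{cone}(G-\bar u)$, obtained from the standard identification of the face lattice of the tangent cone $T_{\bar u}(\Pi)$ with the interval $[F,\Pi]$ together with the observation that every face covering $F$ lies in some facet $F_i$ and hence contributes a generator from $G_i\setminus F$. This cleanly separates a purely combinatorial fact about the polytope (independent of $c$) from the linear-algebra consequence, and makes explicit that the reduced system is a Farkas certificate for the full system. The paper's proof is more elementary in that it avoids tangent cones and the face-lattice correspondence, relying only on affine hyperplanes and a picture, at the cost of the extra case split and the sub-case on whether some $G_i$ lies entirely in $H$.
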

\begin{proof}
	Let $c$ be any solution to the above system, then we have to show that $\alpha = \OT_c(\mu, \nu)$. First, we do the following reduction to the dimension of $\Pi(\mu, \nu)$: Let $B \defeq \aff\{ \Pi(\mu, \nu) \} $ be the minimal affine space that $\Pi(\mu, \nu)$ lies in (i.e.\ its affine hull) and denote with $b \defeq \dim B$ its dimension. Consider the hyperplane $H \defeq \{ x \in \R^{NM} : \inner{c}{x} = \alpha \}$, then if $B \cap H = B$ we are done, else $B \cap H \subsetneq B$ which implies that $H' \defeq B \cap H$ is a $b$-dimensional hyperplane in $B$ with half-spaces $H'_{\geq} \defeq \{ x \in B : \inner{c}{x} \geq \alpha \}$ and $H'_{\leq} \defeq \{ x \in B: \inner{c}{x} \leq \alpha \}$. In particular, the hyperplane $H'$ is determined by $b - 1$ points that don't lie on a lower-dimensional hyperplane. Now, we consider the two cases $R_F = 1$ and $R_F > 1$ separately.

	If $R_F = 1$, this means that $F = G_1$ is maximum dimensional, i.e., it has dimension $b - 1$, and $H'$ is thus determined by $F$. Again using that $F$ is a maximum dimensional face, it follows that either $\Pi(\mu, \nu) \subseteq H'_{\geq}$ or $\Pi(\mu, \nu) \subseteq H'_{\leq}$. Due to the constraint $\inner{c}{w} \geq \alpha$ for one $w \in \EP(\mu, \nu) / F$, we must have $\Pi(\mu, \nu) \subseteq H'_{\geq}$ and thus $\alpha = \OT_c(\mu, \nu)$.

	If $R_F > 1$, we first consider the case that $\inner{c}{v} = \alpha$ for all $v \in G_i$ for some $i \in \idn{R_F}$. As $G_1, \ldots, G_{R_F}$ are maximum dimensional, it follows that there exists exactly one $i \in \idn{R_F}$ such that $\inner{c}{v} = \alpha$ for all $v \in G_i$. Hence, we can argue as in the $R_F = 1$ case that $\alpha = \OT_c(\mu, \nu)$. The other case is $\inner{c}{v} > \alpha$ for some $v \in G_i$ and all $i \in \idn{R_F}$. Suppose that $c$ is not consistent with the observed OT information, i.e., there exists $z \in \EP(\mu, \nu)$ with $\inner{c}{z} < \alpha$. Since by construction $\inner{c}{v} \geq \alpha$ for all $v \in \bigcup_{i=1}^{R_F} G_i$, note that $z \notin \bigcup_{i=1}^{R_F} G_i$. Fix a point $y$ in the relative interior of $F$ and consider the segment $[y, z] \defeq \{ \lambda y + (1 - \lambda) z : 0 \leq \lambda \leq 1 \} \subseteq \Pi(\mu, \nu)$ between $y$ and $z$. As $F$ has dimension strictly less than $b-1$ (else we would have $R_F = 1$), there exists a $2$-dimensional affine space $E$ that contains $[y, z]$ and is orthogonal to $F$. Then, $\Pi(\mu, \nu) \cap E$ is a $2$-dimensional polytope and we are in the setting of \autoref{fig:two_dim_polytope_proof}. Now, note that the segments $[y, w_1]$ and $[y, w_2]$ must belong to some $G_i$ as they are of maximal dimension. Hence, \autoref{fig:two_dim_polytope_proof} shows a contradiction: It holds that $\inner{c}{x} < \alpha$ for all $x \in [y, z] \setminus \{ y, z \}$ (the dotted line), but $\inner{c}{x} \geq \alpha$ for all $x \in \coh\{y, w_1, w_2 \}$ (the gray area).
\end{proof}

\begin{figure}
    \centering
    \includegraphics{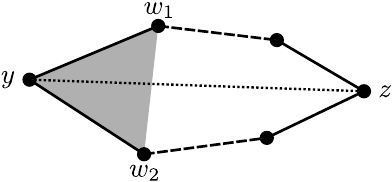}
    \caption{The $2$-dimensional polytope $\Pi(\mu, \nu) \cap E$ from the proof of \autoref{lemma:smaller_set_geq}.}
    \label{fig:two_dim_polytope_proof}
\end{figure}

\subsection{Identifiability from Optimal Transport Plans and One of the Optimal Potentials} \label{Sec:plans_one_pot}

The optimal potentials $(f^{(k)}, g^{(k)})$ from \ref{enum:ot_pot} come in pairs, in the following we call $f^{(k)}$ and $g^{(k)}$ the first and second optimal potential, respectively. In this section, we investigate the case where we observe the OT plans \ref{enum:ot_plan} as well as one, say the first, of the optimal potentials \ref{enum:ot_pot}.

As we do not observe the total costs \ref{enum:ot_cost}, note that we are again in a setting where $c$ is only identifiable up to shift equivalence \eqref{eq:shift_equiv}. However, knowing one of the optimal potentials restricts the form of the permissible shifts. Indeed, note that $(f, g)$ are optimal potentials w.r.t.\ the cost matrix $c$ if and only if $(f + a, g + b)$ are optimal potentials w.r.t.\ $c + a \oplus b$ for all $a \in \R^N$, $b \in \R^M$. Hence, if we observe the first optimal potentials, then we can identify $c$ only up to shifts of the form $0 \oplus b$, $b \in \R^M$, which we call \textit{marginal shift equivalence}. Denote with $\shiftc^{N \times M}_2$ the corresponding quotient space, i.e., $\R^{N \times M}$ modulo marginal shifts.

In the following, we will show that $c$ is identifiable in $\shiftc^{N \times M}_2$ if the supports $\supp \pi^{(k)}$, $k\in \idn{K}$ are sufficiently connected. To this end, denote $S_k \defeq \supp \pi^{(k)}$ and $S_{k,\ell} \defeq S_k \cap S_\ell$ for $k,\ell \in \idn{K}$. For $j \in \idn{M}$, we say that $S_k$ and $S_\ell$ are \textit{$j$-connected} if there exists sequences $r_1, \ldots, r_{q-1} \in \idn{K}$ with $r_0 \defeq k$, $r_q \defeq \ell$ and $i_1, \ldots, i_q \in \idn{N}$ such that $(i_v, j) \in S_{r_{v-1}, r_v}$ for all $v \in \idn{q}$. Using the primal-dual optimality criterion (\autoref{lemma:primal_dual_opt}), we will see that we can compute $g_j^{(k)}$ from $g_j^{(\ell)}$ if $S_k$, $S_\ell$ are $j$-connected. Now, the idea is to fix one $g^{(h)}$ that is $j$-connected to some $g^{(\ell)}_j$ that determines $c_{i,j} = f^{(\ell)}_i + g^{(\ell)}_j$, for all $j \in \idn{M}$. We summarize this in the following definition.

\begin{definition}
	Call the supports $S_1, \ldots, S_K$ \textit{connected (with center $S_h$)} if there exists an $h \in \idn{K}$ such that for all $(i, j) \in \idn{N} \times \idn{M}$ there exists an $\ell \in \idn{K}$ such that $(i, j) \in S_\ell$ and $S_h$, $S_\ell$ are $j$-connected.
\end{definition}

\begin{theorem} \label{thm:ident_plans_one_pot}
	Suppose that we observe $K$ pairs of marginals and the corresponding OT plans \ref{enum:ot_plan} and the first optimal potentials \ref{enum:ot_pot}. If $S_1, \ldots S_K$ are connected, then the cost matrix $c$ is identifiable in $\shiftc_2^{N\times M}$.
\end{theorem}
\begin{proof}
	Suppose that $S_1, \ldots, S_K$ are connected with center $S_h$. Denote with $g^{(k)}, k \in \idn{K}$, the second optimal potentials which we do not observe. We can assume that $g^{(h)} \equiv 0$ by marginal shift invariance. Indeed, otherwise consider the cost matrix $c + 0 \oplus [-g^{(h)}]$ which has the same OT plans and first optimal potentials as $c$, but the second optimal potentials are given by $g^{(k)} - g^{(h)}$, $k\in\idn{K}$. By the primal-dual optimality criterion (\autoref{lemma:primal_dual_opt}), it holds for all $k, \ell \in \idn{K}$ that
	\begin{equation*}
	f^{(k)} \oplus g^{(k)} = c = f^{(\ell)} \oplus g^{(\ell)} \quad \text{on } S_{k,\ell}\,.
\end{equation*}
	In particular, for $(i, j) \in S_{k,\ell}$ we have
	\begin{equation*}
		g^{(k)}_j = g^{(\ell)}_j + \tau^{(k,\ell)}_i\,,
	\end{equation*}
	where $\tau^{(k,\ell)}_i \defeq f^{(\ell)}_i - f^{(k)}_i$ is known and independent of $j$. Hence, if we know $g_j^{(\ell)}$ and $S_k$, $S_\ell$ are $j$-connected, then we can also determine $g_j^{(k)}$. By connectivity, for all $(i, j) \in \idn{N} \times \idn{M}$ there exists an $\ell \in \idn{K}$ such that $(i, j) \in S_\ell$ and $g^{(\ell)}_j$ is determined from $g^{(h)} \equiv 0$. In particular, this forces $c_{i,j} = f^{(\ell)}_i + g^{(\ell)}_j$ by primal-dual optimality (\autoref{lemma:primal_dual_opt}).
\end{proof}

\begin{remark}[Computational complexity]
	To determine if $S_1, \ldots, S_K$ are connected with center $S_h$, for each $j \in \idn{M}$ we first build the $j$-connectivity graphs $G_j$, i.e., with vertices $\idn{K}$ and an edge between $k, \ell \in \idn{K}$ if there exists an $i \in \idn{N}$ such that $(i,j) \in S_{k,\ell}$, and calculate their connected components (e.g.\ using breadth-first search). In the second step, for each $h \in \idn{K}$ and $j \in \idn{M}$ we find the connected component $G_j(h)$ of $G_j$ that contains $h$ and check whether each $i \in \idn{N}$ is contained in some $S_\ell$ for $\ell \in G_j(h)$. If this is the case, then the supports are connected with center $S_h$. Note that both steps take $\bigO(K^2NM)$ operations.
\end{remark}

\subsection{Identifiability from Full Information}\label{Sec:FullInf}

Finally, we consider the simplest case where we observe the full OT information \hyperlink{eq:obs}{(OBS)}, i.e., the total costs \ref{enum:ot_cost}, OT plans \ref{enum:ot_plan} and optimal potentials \ref{enum:ot_pot}.

\begin{theorem} \label{thm:ident_plan_pot}
	Suppose that we observe $K$ pairs of marginals and the corresponding total costs \ref{enum:ot_cost}, OT plans \ref{enum:ot_plan} and optimal potentials \ref{enum:ot_pot}. Then, $c$ is identifiable in $\R^{N \times M}$ if and only if for all $(i, j) \in \idn{N} \times \idn{M}$ there exists a $k \in \idn{K}$ such that $\pi^{(k)}_{i,j} > 0$. In other words, $c$ is uniquely determined on $\bigcup_{k=1}^K \supp \pi^{(k)}$.
\end{theorem}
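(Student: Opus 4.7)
The plan is to prove both directions of the equivalence by directly invoking the primal-dual optimality criterion of \autoref{lemma:primal_dual_opt}, which makes this theorem considerably simpler than its predecessors in the paper.

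For the sufficiency direction, suppose the covering condition holds. Fix any $(i,j) \in \idn{N} \times \idn{M}$ and pick a $k \in \idn{K}$ with $\pi^{(k)}_{i,j} > 0$. Since $\pi^{(k)}$ and $(f^{(k)}, g^{(k)})$ are primal-dual optimal for $c$, \autoref{lemma:primal_dual_opt} yields
\begin{equation*}
    c_{i,j} = f^{(k)}_i + g^{(k)}_j.
\end{equation*}
This formula depends only on the observed potentials and plans, so $c_{i,j}$ is uniquely determined by the observations. Ranging over all $(i,j)$ gives identifiability in $\R^{N \times M}$. This is the forward direction and should be essentially immediate.

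For the necessity direction, I would argue by contraposition. Suppose there is an index $(i_0, j_0) \in \idn{N} \times \idn{M}$ such that $\pi^{(k)}_{i_0, j_0} = 0$ for every $k \in \idn{K}$. I would construct an alternative cost matrix $c' \neq c$ consistent with the same observed data by setting
\begin{equation*}
    c'_{i,j} \defeq c_{i,j} + \eps\, \indicfunc{(i,j) = (i_0, j_0)}
\end{equation*}
for any $\eps > 0$. The four consistency checks required by \hyperlink{eq:obs}{(OBS)} are all straightforward: (a) the dual feasibility $c' \geq f^{(k)} \oplus g^{(k)}$ still holds because the only altered entry is increased and the original inequality was already satisfied there; (b) on $\supp \pi^{(k)}$ we have $c'_{i,j} = c_{i,j} = f^{(k)}_i + g^{(k)}_j$ because $(i_0, j_0) \notin \supp \pi^{(k)}$, so \autoref{lemma:primal_dual_opt} confirms that $\pi^{(k)}$ and $(f^{(k)}, g^{(k)})$ remain primal-dual optimal for $c'$; (c) $\inner{c'}{\pi^{(k)}} = \inner{c}{\pi^{(k)}} = \alpha^{(k)}$ since the altered coordinate is multiplied by zero; (d) $\inner{f^{(k)}}{\mu^{(k)}} + \inner{g^{(k)}}{\nu^{(k)}} = \alpha^{(k)}$ is unchanged. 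Hence $c'$ is a distinct cost matrix consistent with the observations, contradicting identifiability.

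The main obstacle is essentially bookkeeping rather than conceptual difficulty: one has to verify that the perturbed $c'$ satisfies every piece of the observed information, including the dual feasibility $c' \geq f^{(k)} \oplus g^{(k)}$ and the equality on supports. Once \autoref{lemma:primal_dual_opt} is in hand, both directions follow in a few lines, which is the reason the full-information case is stated as the simplest of the identifiability results.
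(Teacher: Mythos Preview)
Your proposal is correct and follows essentially the same argument as the paper: both directions invoke \autoref{lemma:primal_dual_opt}, and for necessity the paper perturbs the uncovered entry upward (by $5$ rather than a generic $\eps>0$) and checks that dual feasibility and complementary slackness are preserved. Your write-up is slightly more explicit in enumerating the consistency checks, but the idea is identical.
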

\begin{proof}
	\autoref{lemma:primal_dual_opt} implies that the cost matrix $c$ is uniquely determined on $\bigcup_{k=1}^K \supp \pi^{(k)}$. Suppose that this support is not full, i.e., there exists a pair $(r, s) \in \idn{N} \times \idn{M}$ with $(r, s) \notin \bigcup_{k=1}^K \supp \pi^{(k)}$. Consider the matrix $\tilde{c} \in \R^{N \times M}$ that is equal to $c$ except $\tilde{c}_{r,s} = c_{r,s} + 5$. Then, note that $c \leq \tilde{c}$ and thus $(f^{(k)}, g^{(k)}) \in \Phi_{\tilde{c}}$ for all $k \in \idn{K}$. Hence, \autoref{lemma:primal_dual_opt} yields that $\pi^{(k)}$ and $(f^{(k)}, g^{(k)})$ are also optimal w.r.t.\ $\tilde{c}$ for all $k \in \idn{K}$. Consequently, in this case $c$ is not identifiable.
\end{proof}

\begin{remark}[Computational complexity]
    In the setting of \autoref{thm:ident_plan_pot}, the cost matrix $c$ can be directly computed on $\bigcup_{k=1}^K \supp \pi^{(k)}$ from the optimal potentials. More specifically, it holds that $c = \max \{ f^{(k)} \oplus g^{(k)} : k \in \idn{K} \}$ where the maximum is taken component-wise. In particular, identifiability can be checked in $\bigO(KNM)$ operations.
\end{remark}

\subsection{Identifiability Under Structural Assumptions on the Cost Matrix} \label{Sec:struct}

We will investigate the impact of structural assumptions on $c$ on identifiability. Suppose that $N = M$. Often, a reasonable assumption is $c_{i,i}=0$, for all $i \in \idn{N}$, which means that the cost of transporting a point to itself is zero. Additionally, we assume that $c$ is symmetric (e.g.\ when resulting from a metric) and thus belongs to the set
\begin{equation} \label{eq:sym_mat}
   \sym_N^0 \coloneqq \{ c \in \R^{N \times N}:\ \text{$c_{i,i}=0$ and $c_{i,j}=c_{j,i}$ for all $i,j \in \idn{N}$}\}\,.
\end{equation}
Denoting with $\utr{c}$ and $\ltr{c}$ the vectorized strict upper and lower triangular matrices of $c$, respectively, we have that $c \in \sym_N^0$ is fully determined by $\utr{c}$ (or $\ltr{c}$). In particular, we only have $(N-1)N/2$ free variables instead of the full $N^2$. As such, we need less information to have identifiability of $c$. Clearly our results that rely on the unique solvability of LP(s) (e.g.\ \autoref{thm:ident_only_total}, \autoref{thm:ident_only_pot}, \autoref{thm:ident_total_plan}) remain true for this reduced problem. In particular, we obtain the following corresponding result to \autoref{thm:ident_total_plan_eq}.

\begin{theorem} \label{thm:ident_total_plan_eq_sym}
    Suppose that we observe $K$ distinct pairs of marginals and the corresponding total costs \ref{enum:ot_cost} and OT plans \ref{enum:ot_plan}. Let $\{ u^{(\ell, k)} : \ell \in \idn{L_k} \}$, $k \in \idn{K}$, be as in \autoref{pr:ri}. Then, the following holds:
    \begin{enumerate}[a)]
        \item If the system $\{ [\utr{} + \ltr{}] \vc{u}^{(\ell, k)} : \ell \in \idn{L_k},\,k \in \idn{K} \}$ contains $(N-1)N/2$ linearly independent vectors, then the cost matrix $c$ is identifiable in $\sym_N^0$.
        \item If $\coh \{u^{(\ell, k)} : \ell \in \idn{L_k} \} =\OPT_c(\pi^{(k)})$ for all $k \in \idn{K}$, then the converse holds.
    \end{enumerate}
\end{theorem}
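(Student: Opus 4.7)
The plan is to exploit the fact that $\sym_N^0$ is a linear subspace of $\R^{N \times N}$ of dimension $N(N-1)/2$, parametrized linearly by the coordinates $\utr{c}$. This reduces the identifiability question to a smaller linear system, after which the argument mirrors that of \autoref{thm:ident_total_plan_eq}.

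\textbf{Reduction via symmetry.} First I would observe that for any $c \in \sym_N^0$ and any matrix $u \in \R^{N \times N}$, the conditions $c_{i,i} = 0$ and $c_{i,j} = c_{j,i}$ give
\begin{equation*}
    \inner{c}{u} \;=\; \sum_{i<j} c_{i,j}\,(u_{i,j} + u_{j,i}) \;=\; \inner{\utr{c}}{[\utr{} + \ltr{}]\vc{u}},
\end{equation*}
where the right-hand inner product is taken in $\R^{N(N-1)/2}$. Therefore the linear equations \eqref{eq:total_costs_plans_known_lin_system}, when the unknown $c$ is restricted to $\sym_N^0$, become the system
\begin{equation*}
    \inner{\utr{c}}{[\utr{} + \ltr{}]\vc{u}^{(\ell,k)}} = \alpha^{(k)}, \qquad \ell \in \idn{L_k},\; k \in \idn{K},
\end{equation*}
in the unknown $\utr{c} \in \R^{N(N-1)/2}$, whose coefficient vectors are exactly those appearing in the statement.

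\textbf{Direction (a).} If the set $\{[\utr{} + \ltr{}]\vc{u}^{(\ell,k)} : \ell \in \idn{L_k},\,k \in \idn{K}\}$ contains $N(N-1)/2$ linearly independent vectors, the reduced system above has a unique solution in $\utr{c}$, which via the identification of $\sym_N^0$ with $\R^{N(N-1)/2}$ uniquely determines $c$ in $\sym_N^0$.

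\textbf{Direction (b).} Conversely, assume $\coh\{u^{(\ell,k)} : \ell \in \idn{L_k}\} = \OPT_c(\pi^{(k)})$ for all $k$ and that the coefficient vectors span a strict subspace of $\R^{N(N-1)/2}$. Then there exists a nonzero $w \in \R^{N(N-1)/2}$ orthogonal to all $[\utr{} + \ltr{}]\vc{u}^{(\ell,k)}$; let $c' \in \sym_N^0 \setminus \{0\}$ be the unique symmetric zero-diagonal matrix with $\utr{c'} = w$. By construction $\inner{c'}{u^{(\ell,k)}} = 0$ for all $\ell,k$. I would then transplant the perturbation argument from the proof of \autoref{thm:ident_total_plan_eq} verbatim: define $c_\tau \defeq c + \tau c'$ and show that for each $k$ there is $\tau(k) > 0$ such that $\pi^{(k)} \in \OPT_{c_\tau}(\pi^{(k)})$ for $0 \leq \tau \leq \tau(k)$, using the finiteness of $\EP(\pi^{(k)})$ and a contradiction via a convergent subsequence of extreme-point optimizers that must end up in $\OPT_c(\pi^{(k)})$, hence (by the assumption) in $\{u^{(\ell,k)} : \ell \in \idn{L_k}\}$, on which $\inner{c'}{\argdot} = 0$. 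Taking $\tau \leq \min_k \tau(k)$ yields a distinct $c_\tau \in \sym_N^0$ producing the same observations, so identifiability in $\sym_N^0$ fails.

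The only potential obstacle is verifying that the perturbation argument of \autoref{thm:ident_total_plan_eq} stays within $\sym_N^0$, but this is automatic because $\sym_N^0$ is a linear subspace and $c, c' \in \sym_N^0$ imply $c_\tau \in \sym_N^0$; no new technical ingredient beyond the linear-algebra reduction is needed.
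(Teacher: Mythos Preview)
Your proposal is correct and matches the paper's approach: the paper does not give an explicit proof of this theorem but states that it is the corresponding result to \autoref{thm:ident_total_plan_eq} for the reduced problem in $\sym_N^0$, and the reduction identity $\inner{c}{x} = \inner{\utr{c}}{[\utr{} + \ltr{}]\vc{x}}$ you use is exactly the one the paper employs in the proof of the subsequent \autoref{thm:ident_only_plans_sym}. The perturbation argument you transplant from \autoref{thm:ident_total_plan_eq} is precisely what the paper intends, and your observation that $c_\tau$ stays in $\sym_N^0$ by linearity is the only additional check needed.
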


Note that if we only observe the OT plans, we do not have the problem of shift invariance anymore. Indeed, for $a,\, b \in \R^N$ it follows that $a \oplus b \in \sym_N^0$ if and only if $a \oplus b = 0$. Hence, it holds that $\sym^0_N = \sym^0_N / { \shifteq } $. Thus, we obtain the following specialization of \autoref{thm:ident_only_plans}.

\begin{theorem} \label{thm:ident_only_plans_sym}
    Suppose that we observe $K$ distinct pairs of marginals and the corresponding OT plans \ref{enum:ot_plan}. Let $\{ u^{(\ell, k)} : \ell \in \idn{L_k} \}$, $k \in \idn{K}$, be as in \autoref{pr:ri}. Then, the following holds:
    \begin{enumerate}[a)]
        \item If it holds for $S \in \N_0$ that
    \begin{equation*}
 	 \dim ( \spanop \{ [\utr{} + \ltr{}] (\bmpi^{(k)} - \bmu^{(\ell,k)}) : \ell \in \idn{L_k},\,k \in \idn{K} \} ) = (N-1)N/2 - S,
 \end{equation*}
    then, the cost matrix $c$ is identifiable in $\sym_N^0$ up to the span of $S$ linearly independent vectors.
    \item If $\coh \{u^{(\ell, k)} : \ell \in \idn{L_k} \} =\OPT_c(\pi^{(k)})$ for all $k \in \idn{K}$, then the converse holds.
    \end{enumerate}
\end{theorem}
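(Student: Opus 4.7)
The plan is to mirror the argument of \autoref{thm:ident_only_plans}, but work in the $(N-1)N/2$-dimensional parameter space of $\sym_N^0$ instead of the full $NM = N^2$-dimensional space. The crucial preliminary observation is the identity
\begin{equation*}
    \inner{c}{x} = \inner*{\utr{c}}{[\utr{} + \ltr{}]\,\vc{x}} \qquad \text{for all } c \in \sym_N^0, \, x \in \R^{N \times N}\,,
\end{equation*}
which follows immediately from $c_{i,i} = 0$ and $c_{i,j} = c_{j,i}$ (the diagonal terms drop out and the off-diagonal terms pair up). Under this identification, $\sym_N^0 \cong \R^{N(N-1)/2}$ via $c \leftrightarrow \utr{c}$, and as remarked before the statement, the shift invariance trivializes, so $\sym_N^0 = \sym_N^0 / \shifteq$.

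For part a), observe that $\pi^{(k)}, u^{(\ell,k)} \in \OPT_c(\pi^{(k)})$ yields $\inner{c}{\pi^{(k)} - u^{(\ell,k)}} = 0$ for all admissible indices. By the identity above, this translates to
\begin{equation*}
    \inner*{\utr{c}}{[\utr{} + \ltr{}](\bmpi^{(k)} - \bmu^{(\ell,k)})} = 0\,.
\end{equation*}
Hence $\utr{c}$ lies in the orthogonal complement (inside $\R^{N(N-1)/2}$) of the span appearing in the hypothesis. By assumption, that span has dimension $(N-1)N/2 - S$, so its orthogonal complement has dimension $S$. Therefore $c$ is identifiable in $\sym_N^0$ up to the span of $S$ linearly independent vectors.

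For part b), I argue the contrapositive: if the span has dimension strictly less than $(N-1)N/2 - S$, then the orthogonal complement has dimension strictly greater than $S$, so we can pick a nonzero $c' \in \sym_N^0$ independent (modulo the $S$-dim ambiguity) of $c$ with $\inner{c'}{\pi^{(k)} - u^{(\ell,k)}} = 0$ for all $\ell, k$. I then claim that there exists $\tau > 0$ such that the cost $c_\tau = c + \tau c'$ still renders every observed plan $\pi^{(k)}$ optimal, contradicting identifiability. This is precisely the perturbation argument from the proof of \autoref{thm:ident_only_plans}: assuming otherwise, one constructs $\tau_s \downarrow 0$ and $\gamma_s \in \OPT_{c_{\tau_s}}(\pi^{(k)}) \cap \EP(\pi^{(k)})$ violating optimality of $\pi^{(k)}$; finiteness of $\EP(\pi^{(k)})$ produces a constant subsequential limit $\gamma \in \OPT_c(\pi^{(k)}) \cap \EP(\pi^{(k)})$; the assumption $\coh\{u^{(\ell,k)} : \ell \in \idn{L_k}\} = \OPT_c(\pi^{(k)})$ forces $\gamma = u^{(\ell,k)}$ for some $\ell$; and then the orthogonality $\inner{c'}{\pi^{(k)} - u^{(\ell,k)}} = 0$ gives $\inner{c_{\tau_s}}{\gamma_s} = \inner{c_{\tau_s}}{\pi^{(k)}}$, the desired contradiction.

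The main obstacle is organizational rather than conceptual: one must be careful to phrase all orthogonality and perturbation statements inside $\sym_N^0 \cong \R^{N(N-1)/2}$, so that the counting of dimensions (orthogonal complement in a $(N-1)N/2$-dimensional ambient space, without any $N+M-1$ shift subspace to subtract) is done in the right space. Once this bookkeeping is in place, every step of \autoref{thm:ident_only_plans} transports verbatim, with $[\utr{} + \ltr{}](\bmpi^{(k)} - \bmu^{(\ell,k)})$ replacing $\pi^{(k)} - u^{(\ell,k)}$ and the absence of the shift quotient simplifying the dimension count.
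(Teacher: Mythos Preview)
Your proposal is correct and follows essentially the same approach as the paper: both start from the identity $\inner{c}{x} = \inner{\utr{c}}{[\utr{}+\ltr{}]\vc{x}}$ for $c\in\sym_N^0$, place $\utr{c}$ in the orthogonal complement of the span in question, and for the converse invoke the perturbation argument of \autoref{thm:ident_only_plans} with $c_\tau = c + \tau c'$ for a suitable $c'\in\sym_N^0$. The paper's proof is in fact terser than yours, simply pointing to \autoref{thm:ident_only_plans} for the details you spell out.
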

\begin{proof}
    Since $c \in \sym_N^0$, it holds for all $x \in \R^{N \times N}$ that
    \begin{equation*}
        \inner{c}{x} = \inner{ \utr{c} }{ [ \utr{} + \ltr{} ] \vc{x}}\,.
    \end{equation*}
    Hence, as in the proof of \autoref{thm:ident_only_plans}, we conclude that
    \begin{equation*}
        \utr{c} \in \left[ \spanop \left\{ [\utr{} + \ltr{}] (\bmpi^{(k)} - \bmu^{(\ell,k)}): \ell \in \idn{L_k},\, k \in \idn{K} \right\} \right]^\perp\,.
    \end{equation*}
    The rest follows analogously to \autoref{thm:ident_only_plans} by considering $c_\tau = c + \tau c' \in \sym_N^0$ where $c' \in \sym_N^0$ is linearly independent of $c$ and
    \begin{equation*}
        \utr{c'} \in \left( \spanop \left\{ [\utr{} + \ltr{}] (\bmpi^{(k)} - \bmu^{(\ell,k)}): \ell \in \idn{L_k},\, k \in \idn{K} \right\} \right)^\perp \,. \qedhere
    \end{equation*}
\end{proof}

\section{Statistical Application: Estimation of the Cost} \label{Sec:Estimation}

If we receive $NM$ linearly independent OT plans \ref{enum:ot_plan} and the corresponding total costs \ref{enum:ot_cost}, we would like to solve \eqref{eq:total_costs_plans_known0}. This can be achieved by computational linear algebra.  However, in practice, the values of the OT plans and the total costs may be subjected to some random noise, so that the observation is no longer \eqref{eq:total_costs_plans_known0}, but
\begin{equation} \label{eq:pi_known_noisy}
	y^{(k)} \coloneqq \alpha^{(k)}+\eps^{(k)} = \inner{c}{\pi^{(k)}} + \eps^{(k)} \,, \qquad k \in \idn{K},
\end{equation}
where $\{\eps^{(k)} \}_{k=1}^K$ is a sequence of independent centered random variables with common variance $\sigma^2 > 0$ (see \autoref{rem:gen_lin_mod} for generalizations). Upon defining the following matrix and vectors
\begin{align*}
	P_K &\defeq \begin{bmatrix}
		\bmpi^{(1)} & \ldots & \bmpi^{(K)}
	\end{bmatrix}^\transp,&  \bm{Y}_K &\defeq [ y^{(k)} ]_{k=1}^K, \\
	 \bmalpha_K &\defeq [\alpha^{(k)}]_{k=1}^K, & \bm{\eps}_K &\defeq [\eps^{(k)}]_{k=1}^K\,,
\end{align*}
the display \eqref{eq:pi_known_noisy} can be rewritten as
\begin{equation} \label{eq:pi_known_mat_noisy}
	P_K \bm{c}  = \bm{Y}_K = \bm{\alpha}_K + \bm{\eps}_K,
\end{equation}
where $\bm{\eps}_K$ is a centered random vector with covariance matrix $\sigma^2 \id$. This reveals \eqref{eq:pi_known_mat_noisy} as a statistical linear model, and we can make use of its well-developed theory. One approach to solve \eqref{eq:pi_known_mat_noisy} is to employ the least squares estimator (which is statistically efficient when the error is normally distributed):
\begin{equation} \label{eq:LSE}
	\hat{c}_K \in \argmin_{c \in  \R^{N \times M}} \norm{\bm{Y}_K - P_K \bm{c}}^2_2\,.
\end{equation}
If $NM$ elements of $\{\bm{\pi}^{(k)}\}_{k=1}^K$ are linearly independent, the matrix $P_K^\transp P_K$ is invertible and the solution can be expressed as
\begin{equation*}
	\hat{\bm{c}}_K = (P_K^\transp P_K)^{-1} P_K^\transp \bm{Y}_K\,.
\end{equation*}

\begin{theorem}\label{thm:CLT}
	Assume that there exists a solution $c$ of \eqref{eq:total_costs_plans_known0} and that we observe for $K = 1, 2, \ldots$ a sequence of noisy observations $\bm{Y}_K$ in \eqref{eq:pi_known_noisy}. Suppose that there exists a positive definite matrix $Q \in \R^{NM \times NM}$ such that
	\begin{equation*}
		\frac{1}{K} P_K^\transp P_K \to Q \qquad \text{for } K \to \infty\,.
	\end{equation*}
	Then, the least squares estimator $\hat{c}_K$ from \eqref{eq:LSE} is strongly consistent, i.e.,
	\begin{equation*}
		\hat{\bm{c}}_K \asconv \bm{c} \qquad \text{for } K \to \infty\,.
	\end{equation*}
	If additionally there is a $\delta > 0$ such that the $(2+\delta)$-th moment of $\eps^{(k)}$ exists and is uniformly bounded for all $k=1,2,\ldots$, then it follows that
	\begin{equation*}
		\sqrt{K}(\hat{\bm{c}}_K -\bm{c}) \wconv \calN({\bf 0}, \sigma^2 Q^{-1}) \qquad \text{for } K \to \infty \,,
	\end{equation*}
	where $\wconv$ denotes convergence in distribution and $\calN(\mu, \Sigma)$ is a $NM$-dimensional normal distribution with expectation $\mu$ and covariance $\Sigma$.
\end{theorem}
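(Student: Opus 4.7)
The plan is to reduce everything to standard linear-model asymptotics with deterministic design $P_K$ and independent centered errors $\eps^{(k)}$. Because $\frac{1}{K} P_K^\transp P_K \to Q$ with $Q$ positive definite, the matrix $P_K^\transp P_K$ is invertible for all sufficiently large $K$, and so the closed-form expression $\hat{\bm{c}}_K = (P_K^\transp P_K)^{-1} P_K^\transp \bm{Y}_K$ is well-defined eventually. Using $\bm{Y}_K = P_K \bm{c} + \bm{\eps}_K$, I would first write
\begin{equation*}
    \hat{\bm{c}}_K - \bm{c} = \left(\tfrac{1}{K} P_K^\transp P_K\right)^{-1} \tfrac{1}{K}\, P_K^\transp \bm{\eps}_K\,.
\end{equation*}
Since matrix inversion is continuous at $Q$, the first factor converges to $Q^{-1}$. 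The whole proof thus reduces to controlling $\tfrac{1}{K} P_K^\transp \bm{\eps}_K$ (for consistency) and $\tfrac{1}{\sqrt{K}} P_K^\transp \bm{\eps}_K$ (for the CLT) and then invoking Slutsky's lemma.

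For strong consistency, I would apply Kolmogorov's SLLN coordinatewise to the independent, centered $\R^{NM}$-valued sequence $\{\bm{\pi}^{(k)} \eps^{(k)}\}_{k \in \N}$. Each coordinate $\pi^{(k)}_{i,j}\eps^{(k)}$ is a centered independent random variable with variance at most $\sigma^2$ (because $\pi^{(k)}_{i,j} \in [0,1]$), so $\sum_{k=1}^\infty k^{-2} \mathrm{Var}(\pi^{(k)}_{i,j}\eps^{(k)}) < \infty$. Kolmogorov's SLLN then yields $\tfrac{1}{K} P_K^\transp \bm{\eps}_K \asconv 0$, and Slutsky gives $\hat{\bm{c}}_K \asconv \bm{c}$.

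For the CLT, the crux is asymptotic normality of $\tfrac{1}{\sqrt{K}} P_K^\transp \bm{\eps}_K$, which I would establish via the Cramér--Wold device. For any $\bm{t} \in \R^{NM}\setminus\{0\}$, the projection
\begin{equation*}
    \bm{t}^\transp \tfrac{1}{\sqrt{K}} P_K^\transp \bm{\eps}_K = \tfrac{1}{\sqrt{K}}\sum_{k=1}^K (\bm{t}^\transp \bm{\pi}^{(k)})\, \eps^{(k)}
\end{equation*}
is a normalized sum of independent, centered, uniformly bounded-weighted random variables. The assumption $\tfrac{1}{K} P_K^\transp P_K \to Q$ implies $\tfrac{1}{K}\sum_{k=1}^K (\bm{t}^\transp \bm{\pi}^{(k)})^2 \to \bm{t}^\transp Q \bm{t} > 0$, so the variance of this sum is of order $K\cdot \sigma^2 \bm{t}^\transp Q \bm{t}$. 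The uniform $(2+\delta)$-moment bound on $\eps^{(k)}$ together with boundedness of $\bm{\pi}^{(k)}$ gives $\sum_{k=1}^K E\bigl|(\bm{t}^\transp\bm{\pi}^{(k)})\eps^{(k)}\bigr|^{2+\delta} = O(K)$, while the denominator in Lyapunov's ratio is of order $K^{1+\delta/2}$; hence Lyapunov's condition holds and the sum converges in distribution to $\calN(0, \sigma^2 \bm{t}^\transp Q \bm{t})$. Cramér--Wold yields $\tfrac{1}{\sqrt{K}} P_K^\transp \bm{\eps}_K \wconv \calN(\bm{0}, \sigma^2 Q)$, and combining with the convergence $(\tfrac{1}{K} P_K^\transp P_K)^{-1} \to Q^{-1}$ via Slutsky gives $\sqrt{K}(\hat{\bm{c}}_K - \bm{c}) \wconv Q^{-1}\calN(\bm{0}, \sigma^2 Q) = \calN(\bm{0}, \sigma^2 Q^{-1})$.

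The main technical hurdle is the clean verification of the Lyapunov condition for the triangular array with deterministic weights $\bm{t}^\transp \bm{\pi}^{(k)}$; this is precisely where both the uniform $(2+\delta)$-moment assumption on the errors and the positive definiteness of $Q$ are essential, the latter ensuring that the normalizing variance does not degenerate along any direction. The rest amounts to standard bookkeeping with the SLLN, Slutsky's lemma, and the Cramér--Wold device.
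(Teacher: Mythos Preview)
Your proof is correct. The paper's own proof consists of a single sentence citing two results from White's \emph{Asymptotic Theory for Econometricians}, so you are essentially unpacking the standard OLS asymptotics that the citation points to: the decomposition $\hat{\bm c}_K - \bm c = (\tfrac1K P_K^\transp P_K)^{-1}\tfrac1K P_K^\transp \bm\eps_K$, Kolmogorov's SLLN for consistency, and Lyapunov's CLT via Cram\'er--Wold for asymptotic normality are exactly the ingredients behind those referenced results, with the boundedness $\pi^{(k)}_{i,j}\in[0,1]$ making both the variance-series and Lyapunov checks immediate.
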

\begin{proof}
	Follows from \cite[Theorem~2.12 and Exercise~5.12]{White2001}.
\end{proof}

Based on the above limit law, we can derive the following asymptotic $(1-\alpha)$-confidence ellipsoid for $\bm{c}$,
\begin{equation*}
	\biggl\{ c \in \R^{N \times M} : [\hat{\bm{c}}_K -\bm{c}]^\transp Q [\hat{\bm{c}}_K -\bm{c}] \leq \frac{\sigma^2}{K} \chi^2_{NM, 1-\alpha} \biggr\}\,,
\end{equation*}
where $\chi^2_{NM, 1-\alpha}$ is the $(1-\alpha)$-quantile of the chi-squared distribution with $NM$ degrees of freedom.

In a similar fashion this can be extended to different estimators such as the $L^1$ minimizer of \eqref{eq:pi_known_mat_noisy}, see e.g.\ \cite{Carroll1982}, and different measurement errors $\bm{\eps}_K$, e.g., for certain dependency structures, see e.g.\ \cite{Wu2007}.

\begin{remark} \label{rem:gen_lin_mod}
	Our linear model is based on \eqref{eq:total_costs_plans_known0} and not \eqref{eq:total_costs_plans_known_lin_system}. If we want to use the additional information provided by \autoref{pr:ri}, we have a more complicated linear model. Indeed, then the noise is not i.i.d.\ anymore but coupled, i.e., multiple observations have the exact same noise. This can be dealt with similarly, but now the least squares estimator in \eqref{eq:LSE} has to be replaced by the generalized least squares estimator \cite{Amemiya1985,Rao2008}.
\end{remark}

\subsection{Identifiability in High-Dimensional Linear Models: Sparsity}\label{Sec:EstSparse}

We again consider the noisy model \eqref{eq:pi_known_noisy} and assume that $c$ is sparse. In the usual sense, this takes the form of $c_{i,j} = 0$ for many pairs of indices $(i, j) \in \idn{N} \times \idn{M}$. As this means that transport between $x_i$ and $y_j$ is for free and hence often not a reasonable assumption, we utilize a shifted version tailored to OT. More specifically, for a predefined value $b_0 \in \R$, we assume that the set $S \defeq \{ (i,j) \in \idn{N} \times \idn{M} : c_{i,j} \neq b_0 \}$ has small cardinality $s \ll NM$, i.e., that the transport cost between many points is equal. In particular, this means that the exact choice of transport between these pairs of points is irrelevant. In this setting, we can make use of this type of sparsity to effectively estimated $c$ even when $NM$ is large compared to the number of observations $K$. We illustrate estimation of $c$ with a corresponding modification of the LASSO estimator \cite{Tibshirani1996}
\begin{equation} \label{eq:LASSO}
	\hat{c}_K \in \argmin_{c \in \R^{N \times M}} \norm{\bm{Y}_K - P_K \bm{c}}^2_2 + \lambda_K \norm{\bm{c} - b_0}_1\,.
\end{equation}
To be able to give error bounds for the LASSO estimator, we require that $P_K$ satisfies the restricted eigenvalue condition over $S$ with parameters $\kappa > 0$ (cf.\ \cite{Wainwright2019,Buehlmann2011}). For $x \in \R^{NM}$ denote with $x_S$ and $x_{S^c}$ the restriction of $x$ to the index sets $S$ or $S^c$. Then, $P_K$ satisfies the restricted eigenvalue property (REP) over $S$ with parameter $\kappa > 0$ if for all $x \in \R^{NM}$ with $\norm{x_{S^c}}_1 \leq 3 \norm{x_{S}}_1$ it holds that
\begin{equation} \label{eq:restr_ev_cond} \tag{REP}
	\frac{1}{K} \norm{P_K x}_2^2 \geq \kappa \norm{x}_2^2\,.
\end{equation}
Naturally, \eqref{eq:restr_ev_cond} is satisfied if the observed OT plans contain $\{ \delta_i \otimes \delta_j \mid (i, j) \in S \}$, i.e., $P_K$ is the identity matrix when restricting to $S$. This means that we observe all the non-sparse cost coefficients directly (up to noise). However, more generally it is not clear to us what conditions on the transport plans imply \eqref{eq:restr_ev_cond}.

Under \eqref{eq:restr_ev_cond} (and related conditions), error bounds for the LASSO and related estimators have been the topic of a rich body of literature, see e.g.\ \cite[]{Giraud2021,Wainwright2019,Buehlmann2011} for recent monographs and further references. Exemplarily, we assume that the noise is normally distributed, then we obtain the following error bound from \cite[Example~7.14]{Wainwright2019}.

\begin{theorem}\label{thm:BoundsLasso}
	Assume that $P_K$ satisfies \eqref{eq:restr_ev_cond} over $S$ with parameter $\kappa > 0$ and that the noise $\eps^{(1)}, \ldots, \eps^{(K)}$ is i.i.d.\ zero-mean normally distributed with variance $\sigma^2 > 0$. Then, it holds for any $\delta > 0$ for the LASSO type estimator $\hat{\bm{c}}_K$ from \eqref{eq:LASSO} with regularization parameter
	\begin{equation*}
		\lambda_K = 2 \sigma \left[ \sqrt{\frac{2 \log N + 2 \log M}{K}} + \delta \right]
	\end{equation*}
	with probability at least $1 - 2e^{-K\delta^2 / 2}$ that
	\begin{equation*}
		\norm{\hat{\bm{c}}_K - \bm{c}}_2 \leq \frac{6 \sigma}{\kappa} \sqrt{s} \left[ \frac{2 \log N + 2 \log M}{K} + \delta \right]\,.
	\end{equation*}
\end{theorem}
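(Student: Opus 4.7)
The plan is to reduce the problem to a standard LASSO oracle inequality and then import a known high-probability bound, as indicated by the citation to Example 7.14 in Wainwright's monograph. First, I would perform a centering substitution. Let $\tilde{c} \defeq c - b_0 \mathbf{1}_{N \times M}$, where $\mathbf{1}_{N \times M}$ is the all-ones matrix; by construction $\tilde{c}$ is supported on $S$, so $\|\tilde{\bm c}\|_0 = s$. Since each row of $P_K$ is a (vectorised) coupling satisfying $\langle \pi^{(k)}, \mathbf{1}_{N \times M}\rangle = 1$, we have $P_K(\bm c - b_0 \mathbf{1}) = P_K \bm c - b_0 \mathbf{1}_K$. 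Setting $\tilde{\bm Y}_K \defeq \bm Y_K - b_0 \mathbf{1}_K$, the problem \eqref{eq:LASSO} becomes the textbook LASSO
\begin{equation*}
	\hat{\tilde{\bm c}}_K \in \argmin_{\tilde{\bm c} \in \R^{NM}} \|\tilde{\bm Y}_K - P_K \tilde{\bm c}\|_2^2 + \lambda_K \|\tilde{\bm c}\|_1,
\end{equation*}
with $\tilde{\bm Y}_K = P_K \tilde{\bm c} + \bm \eps_K$ and $\tilde{\bm c}$ being $s$-sparse with support $S$. Under \eqref{eq:restr_ev_cond}, the standard basic-inequality argument (see e.g.\ Wainwright \cite{Wainwright2019}, Theorem 7.13) shows that on the event
\begin{equation*}
	\mathcal{E} \defeq \{\lambda_K \geq 2 \| P_K^\transp \bm \eps_K / K \|_\infty \}
\end{equation*}
the estimator obeys $\|\hat{\tilde{\bm c}}_K - \tilde{\bm c}\|_2 \leq 3 \sqrt{s}\, \lambda_K / \kappa$; and since $\hat{\bm c}_K - \bm c = \hat{\tilde{\bm c}}_K - \tilde{\bm c}$ this is the desired error bound up to identifying the right $\lambda_K$.

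It remains to show that the choice of $\lambda_K$ in the statement is admissible with the claimed probability. The $j$-th coordinate of $P_K^\transp \bm \eps_K / K$ is a centered Gaussian with variance $\sigma^2 \|P_{K,j}\|_2^2/K^2$, and because entries of any transport plan lie in $[0,1]$ we have $\|P_{K,j}\|_2^2 \leq K$, so each coordinate has variance at most $\sigma^2/K$. The Gaussian maximum inequality over the $NM$ coordinates then yields
\begin{equation*}
	\Pr\!\left( \| P_K^\transp \bm \eps_K / K \|_\infty \geq \sigma\sqrt{\tfrac{2\log N + 2\log M}{K}} + \sigma\delta \right) \leq 2\exp(-K \delta^2 / 2),
\end{equation*}
by a standard union bound and Gaussian tail estimate. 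Consequently $\mathcal{E}$ holds with probability at least $1 - 2 e^{-K\delta^2/2}$ for the stated $\lambda_K$.

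Combining the two steps yields the claimed bound (with the bracket carried through from $\lambda_K$). There is no substantive obstacle here beyond carefully tracking constants; the only subtlety worth flagging is the reduction via $b_0$, which both preserves the design matrix $P_K$ (and hence \eqref{eq:restr_ev_cond}) and turns the unusual penalty $\|\bm c - b_0\|_1$ into a standard $\ell^1$ norm with sparsity equal to $\#S = s$. If one prefers, the entire argument is the direct specialization of \cite[Example~7.14]{Wainwright2019} to the design $P_K$ after centering.
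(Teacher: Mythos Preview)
Your proposal is correct and matches the paper's approach: the paper gives no proof at all beyond the sentence ``we obtain the following error bound from \cite[Example~7.14]{Wainwright2019}'', and what you have written is precisely the natural fleshing-out of that citation, including the centering substitution $\tilde c = c - b_0\mathbf{1}$ (using $\langle\pi^{(k)},\mathbf{1}\rangle=1$) needed to reduce the shifted penalty $\|\bm c - b_0\|_1$ to a standard $\ell^1$ penalty with an $s$-sparse target. You have also implicitly caught two typos in the stated theorem: the probability should read $1-2e^{-K\delta^2/2}$ rather than $1-2e^{-n\delta^2/2}$, and the bracket in the final bound should carry $\sqrt{(2\log N+2\log M)/K}$ (inherited from $\lambda_K$) rather than $(2\log N+2\log M)/K$.
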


\section*{Acknowledgements}

We are grateful to two anonymous referees for their helpful comments, which led to various improvements.

The research of M.\ Groppe was funded by the Deutsche Forschungsgemeinschaft (DFG, German Research Foundation) RTG 2088 ``\emph{Discovering structure in complex data: Statistics meets Optimization and Inverse Problems}'' and A.\ Munk acknowledges support of the DFG RU 5381 ``\emph{Mathematical Statistics in the Information Age --- Statistical Efficiency and Computational Tractability}''.

\printbibliography[title = References]

\end{document}